\sloppy \allowdisplaybreaks[1]
\newtheorem{theorem}{{\bf Theorem}}[section]
\newtheorem{lemma}[theorem]{{\bf Lemma}}
\newtheorem{example}[theorem]{Example}
\newtheorem{remark}[theorem]{Remark}
\newtheorem{definition}[theorem]{Definition}
\numberwithin{equation}{section}
\font\mathbb=msbm10 at 12pt
\newcommand{\Z}{\mbox{\mathbb Z}}
\newcommand{\R}{\mbox{\mathbb R}}
\newcommand{\bs}{\boldsymbol}
\renewcommand*\env@matrix[1][*\c@MaxMatrixCols c]{%
  \hskip -\arraycolsep
  \let\@ifnextchar\new@ifnextchar
  \array{#1}}
\title[Hyers-Ulam stability for differential systems]{Hyers-Ulam stability for differential systems with $2\times 2$ constant coefficient matrix}
\author[anderson]{Douglas R. Anderson}
\address{Department of Mathematics, Concordia College, Moorhead, MN 56562 USA}
\email{andersod@cord.edu}
\author[Onitsuka]{Masakazu Onitsuka}
\address{Department of Applied Mathematics, Okayama University of Science, Okayama, 700-0005 Japan}
\email{onitsuka@xmath.ous.ac.jp}
\begin{document}

\begin{abstract}
We explore the Hyers-Ulam stability of perturbations for a homogeneous linear differential system with $2\times 2$ constant coefficient matrix. New necessary and sufficient conditions for the linear system to be Hyers-Ulam stable are proven, and for the first time, the best (minimal) Hyers-Ulam constant for systems is found in some cases. Several examples are provided. Obtaining the best Hyers-Ulam constant for second-order constant coefficient differential equations illustrates the applicability of the strong results.
\end{abstract}

\keywords{Perturbations, Hyers-Ulam stability, differential systems, Putzer's algorithm, minimal Hyers-Ulam constant}

\subjclass[2010]{34D10, 34A12, 34A30, 39B82}

\maketitle\thispagestyle{empty}



\section{Introduction}

Given a solution to a perturbation of a linear system, an important question, first posed by Ulam, is whether a solution to the unperturbed system exists that stays close to the perturbed solution, and if so, how close. This is frequently discussed under the rubric Hyers-Ulam stability. Of late, there is a significant interest in Hyers-Ulam stability (HUS) for differential equations and differential systems. Related to HUS for second-order differential equations, please see \cite{rpp,bp,dragicevic,fo,ym}; for third-order or fourth-order differential equations, see \cite{mp2,ugbrgvla}. In the case of higher-order differential equations, the papers \cite{mp1,mps,msp,sl} are of interest. The paper \cite{jung} established the Hyers-Ulam stability of a system of first-order linear differential equations with constant coefficients, followed by \cite{jung2,jung3}. Other recent papers on HUS and differential systems include \cite{bclo,bmppr}.
In this work, we sharpen and clarify those results, in the $2\times 2$ constant coefficient matrix case, by providing necessary and sufficient conditions for the linear system to be Hyers-Ulam stable, and we suss out specific Hyers-Ulam constants in the event of Hyers-Ulam stability, including specific cases with the minimal constant. We also provide a lower bound for the Hyers-Ulam constant, in the case of stability. The key to instability is at least one eigenvalue of the constant coefficient matrix has zero real part. We employ the direct formula for the matrix exponential via Putzer's algorithm, in place of the Jordan form. These new results significantly improve \cite[Theorem 3]{bmppr} and \cite[Corollary 2]{jung2} in the $2\times 2$ constant coefficient matrix case.

We consider the linear differential system
\begin{equation}\label{maineq}
   \bs{x}'(t)=A\bs{x}(t), \quad t\in\R,
\end{equation}
where $A$ is a $2\times 2$ constant matrix, and use the $\R^2$ maximum norm 
$$ \|\bs{v}\|_{\infty}:=\max\left\{|v_{1}|,\; |v_{2}|\right\}, \quad \bs{v}=\begin{bmatrix} v_1 \\ v_2 \end{bmatrix}, $$
with induced matrix norm
$$ \|A\|_{\infty}:=\max_{i=1,2}\left\{|a_{i1}|+|a_{i2}|\right\}, \quad A=\begin{bmatrix} a_{11} & a_{12} \\ a_{21} & a_{22} \end{bmatrix}. $$


\begin{definition}[Hyers-Ulam Stability]
Given an arbitrary $\varepsilon>0$, suppose a differentiable vector function $\bs{\phi}$ satisfies 
\begin{equation}\label{phi-eq}
 \left\|\bs{\phi}'(t)-A\bs{\phi}(t)\right\|_{\infty} \le \varepsilon, \quad t\in\R. 
\end{equation}
Then, \eqref{maineq} is Hyers-Ulam stable on $\R$ if and only if there exists a solution $\bs{x}$ of \eqref{maineq} such that
$$ \|\bs{\phi}(t)-\bs{x}(t)\|_{\infty} \le K\varepsilon, \quad t\in\R, $$
where $K$ is the Hyers-Ulam stability constant. 
\end{definition}

In the next section, the solution formula is presented by using the Putzer algorithm. Furthermore, based on that formula the lower bound for HUS constant $K$ is given. Section 3 is the core of this study. Hyers-Ulam stability is considered and some specific HUS constants are given. In Section 4, instability is considered. In addition, by combining the results obtained in Sections 2, 3, and 4, a necessary and sufficient condition for HUS is presented, and in some cases the minimal (best) HUS constant is given. In Section 5, the results obtained are applied to second-order linear differential equations and examples with the best HUS constants are presented. Concluding remarks on the significance of the results are given in Section 6.


\section{Putzer's Algorithm and Lower Bound for HUS Constants}
In this section, two technical lemmas are proven that lay the foundation for the main results to follow. Note that Lemma \ref{lower-bound} below, which provides a lower bound for the Hyers-Ulam constant in the case of Hyers-Ulam stability, is a novel result, as most studies find only upper bounds.


\begin{lemma}[Matrix Exponential]\label{distinct}
If $A$ has real and distinct eigenvalues $\lambda_1$ and $\lambda_2$, then $A$ has the form
\begin{equation}\label{Aformdistinct}
 A=\begin{bmatrix} a & b \\ c & \lambda_1+\lambda_2-a \end{bmatrix}, 
\end{equation}
and $e^{tA}$ has the form
\begin{equation}\label{distinctev}
 e^{tA} = \frac{1}{\lambda_1-\lambda_2} \begin{bmatrix} (a-\lambda_2)e^{\lambda_1 t}+(\lambda_1-a)e^{\lambda_2 t} & b\left(e^{\lambda_1 t}-e^{\lambda_2 t}\right) \\
          c\left(e^{\lambda_1 t}-e^{\lambda_2 t}\right) & (\lambda_1-a)e^{\lambda_1 t}+(a-\lambda_2)e^{\lambda_2 t}\end{bmatrix},
\end{equation}
where $bc=(\lambda_1-a)(a-\lambda_2)$.

If $A$ has a repeated eigenvalue $\lambda\in\R$, then $A$ has the form
\begin{equation}\label{Aformrepeat}
 A = \begin{bmatrix} a & b \\ c & d \end{bmatrix} = \begin{bmatrix} \lambda-\eta & b \\ c & \lambda+\eta \end{bmatrix} 
\end{equation}
for some $\eta=\lambda-a$, and $e^{tA}$ has the form
\begin{equation}\label{repeatev}
e^{tA} = e^{\lambda t}\begin{bmatrix} 1-\eta t & bt \\ ct & 1+\eta t \end{bmatrix}, 
\end{equation}
where $bc+\eta^2=0$. 

If $A$ has complex-valued eigenvalues $\lambda=\alpha\pm i\beta$ with $\beta\ne 0$ and $i=\sqrt{-1}$, then 
\begin{equation}\label{Aformcomplex}
 A=\begin{bmatrix} a & b \\ c & 2\alpha-a \end{bmatrix}, 
\end{equation}
and $e^{tA}$ has the form
\begin{equation}\label{complexev}
 e^{tA} = e^{\alpha t} \begin{bmatrix} \cos(\beta t) + \frac{a-\alpha}{\beta}\sin(\beta t) & \frac{b}{\beta}\sin(\beta t)  \\
          \frac{c}{\beta}\sin(\beta t)  & \cos(\beta t) + \frac{\alpha-a}{\beta}\sin(\beta t)  \end{bmatrix},
\end{equation}
where $(\alpha-a)^2+bc+\beta^2=0$.
\end{lemma}

\begin{proof}
Represent the $2\times 2$ matrix $A$ via the real entries 
\begin{equation}\label{genAform}
 A=\begin{bmatrix} a & b \\ c & d \end{bmatrix}. 
\end{equation}
Assuming $A$ has real and distinct eigenvalues $\lambda_1$ and $\lambda_2$, then the trace of $A$ satisfies $a+d=\lambda_1+\lambda_2$ and the determinant satisfies $ad-bc=\lambda_1\lambda_2$, so $d=\lambda_1+\lambda_2-a$ and $bc=(\lambda_1-a)(a-\lambda_2)$, via \eqref{genAform}. Using Putzer's algorithm \cite[Theorem 2.35]{kp}, we have
\[ e^{tA} = e^{\lambda_1 t}I_2 + \frac{1}{\lambda_1-\lambda_2} \left(e^{\lambda_1 t}-e^{\lambda_2 t}\right)(A-\lambda_1I_2), \]
where $I_2$ is the $2\times 2$ identity matrix. Consequently, the first result follows. 

If $A$ has a repeated eigenvalue $\lambda\in\R$, then $A$ has the form \eqref{Aformrepeat} for some $\eta=\lambda-a$, and by Putzer's algorithm, $e^{tA}$ has the form \eqref{repeatev} above, where $bc+\eta^2=0$. 

Now, consider the case where $\lambda=\alpha\pm i\beta$ with $\beta\ne 0$. Then, the trace of $A$ in \eqref{genAform} satisfies $a+d=2\alpha$ and the determinant satisfies $ad-bc=\alpha^2+\beta^2$. Again, using Putzer's algorithm \cite[Theorem 2.35]{kp}, we have
\[ e^{tA} = e^{(\alpha+i\beta)t}I_2 + \frac{1}{2i\beta} \left(e^{(\alpha+i\beta) t}-e^{(\alpha-i\beta) t}\right)(A-(\alpha+i\beta)I_2), \]
where $I_2$ is the $2\times 2$ identity matrix. Consequently, the third result follows. 
This completes all cases and the proof.
\end{proof}


\begin{lemma}[Lower Bound for HUS Constant $K$]\label{lower-bound}
Suppose that \eqref{maineq} is Hyers-Ulam stable and $A$ is invertible. If the real parts of the two eigenvalues of $A$ are both nonzero, then the minimal Hyers-Ulam constant $K$ satisfies $K\ge\left\|A^{-1}\bs{e}\right\|_{\infty}$, where $\bs{e}$ is any unit vector.
\end{lemma}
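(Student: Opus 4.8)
The plan is to produce the lower bound by testing Hyers-Ulam stability against a single, cleverly chosen approximate solution and then letting the dynamics of \eqref{maineq} do the work. Since $A$ is invertible, I would fix any unit vector $\bs{e}$ (so that $\|\bs{e}\|_{\infty}=1$), fix an arbitrary $\varepsilon>0$, and set $\bs{p}:=\varepsilon A^{-1}\bs{e}$, taking $\bs{\phi}(t)\equiv\bs{p}$ to be the constant function. Then $\bs{\phi}'(t)-A\bs{\phi}(t)=-A\bs{p}=-\varepsilon\bs{e}$, so $\|\bs{\phi}'(t)-A\bs{\phi}(t)\|_{\infty}=\varepsilon$ for all $t$, and $\bs{\phi}$ is an admissible approximate solution in the sense of \eqref{phi-eq}.

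Invoking Hyers-Ulam stability with a constant $K$, there is a genuine solution $\bs{x}(t)=e^{tA}\bs{x}_0$ of \eqref{maineq} with $\|\bs{\phi}(t)-\bs{x}(t)\|_{\infty}=\|\bs{p}-\bs{x}(t)\|_{\infty}\le K\varepsilon$ for all $t\in\R$. Because $\bs{\phi}\equiv\bs{p}$ is bounded and the difference $\bs{\phi}-\bs{x}$ is bounded by $K\varepsilon$, the solution $\bs{x}=\bs{\phi}-(\bs{\phi}-\bs{x})$ is bounded on all of $\R$.

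The crux is to show that the hypothesis forces $\bs{x}\equiv\bs{0}$. I would read off the three explicit forms of $e^{tA}$ from Lemma \ref{distinct} and argue that, when both eigenvalues have nonzero real part, no nontrivial solution can be bounded on all of $\R$: in the real cases the entries are governed by $e^{\lambda_1 t},e^{\lambda_2 t}$ (or by $e^{\lambda t}$ with polynomial factors), and in the complex case by $e^{\alpha t}$ times bounded trigonometric terms, so a nonzero $\bs{x}_0$ always produces a factor that is unbounded either as $t\to+\infty$ or as $t\to-\infty$. Hence $\bs{x}_0=\bs{0}$ and $\bs{x}\equiv\bs{0}$.

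Feeding this back, $\|\bs{p}\|_{\infty}=\|\bs{p}-\bs{x}(t)\|_{\infty}\le K\varepsilon$; since $\bs{p}=\varepsilon A^{-1}\bs{e}$, dividing by $\varepsilon$ yields $\|A^{-1}\bs{e}\|_{\infty}\le K$. As $K$ was any admissible stability constant, the inequality passes to the minimal one, giving the claim. I expect the main obstacle to be the boundedness dichotomy in the previous paragraph, in particular handling the saddle case (one positive and one negative eigenvalue) and the complex case cleanly, where one must track the behavior at both $+\infty$ and $-\infty$ rather than at a single limit; the nonzero-real-part hypothesis is exactly what excludes the borderline bounded-but-nontrivial solutions and is therefore indispensable.
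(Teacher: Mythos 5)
Your proposal is correct and is essentially the paper's argument: both test stability against an $\varepsilon$-approximate solution manufactured from $\varepsilon A^{-1}\bs{e}$ and reduce the lower bound to the fact that, when both eigenvalues have nonzero real part, no nontrivial solution $e^{tA}\bs{y}_0$ can stay within bounded distance of a bounded function; your constant choice $\bs{\phi}\equiv\varepsilon A^{-1}\bs{e}$ and the paper's $\bs{\phi}(t)=\varepsilon\left(e^{tA}-I_2\right)A^{-1}\bs{e}$ yield the same difference $e^{tA}\bs{y}_0\mp\varepsilon A^{-1}\bs{e}$ up to sign and a relabeling of the free initial vector, so the final identity $\|\bs{\phi}(t)-\bs{x}(t)\|_{\infty}=\varepsilon\|A^{-1}\bs{e}\|_{\infty}$ is reached the same way. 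Do note that the unboundedness dichotomy you defer is where all of the paper's effort actually goes (a case-by-case analysis through the three forms of $e^{tA}$ in Lemma \ref{distinct}, including ruling out cancellation between independent eigenmodes when both exponentials grow, and sampling at $t_n=2n\pi/|\beta|$ in the complex case), so your sketch identifies the right statement but still owes that verification.
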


\begin{proof}
Let arbitrary $\varepsilon>0$ be given, and consider \eqref{maineq}. Assume $A$ is invertible. Set
\[ \bs{\phi}(t) = \varepsilon\left(e^{tA}-I_2\right)A^{-1}\bs{e}, \]
where $\bs{e}$ is any unit vector. Recalling that $A^{-1}$ and $e^{tA}$ commute, it follows that
\[ \bs{\phi}'(t)-A\bs{\phi}(t) = \varepsilon e^{tA}\bs{e} - \varepsilon \left(e^{tA}-I_2\right)\bs{e} = \varepsilon \bs{e}, \]
and thus $\|\bs{\phi}'(t)-A\bs{\phi}(t)\|_{\infty} = \varepsilon$; in other words, $\bs{\phi}$ satisfies \eqref{phi-eq}. Let 
\[ \bs{x}(t)=e^{tA}\bs{x}_0, \quad \bs{x}_0\in \R^2. \]
Then, $\bs{x}$ is a solution of \eqref{maineq}, and 
\[ \|\bs{\phi}(t)-\bs{x}(t)\|_{\infty} = \left\|e^{tA}\left(\varepsilon A^{-1}\bs{e}-\bs{x}_0\right)-\varepsilon A^{-1}\bs{e}\right\|_{\infty}. \]
If the condition
\[ \bs{x}_0=\varepsilon A^{-1}\bs{e} \]
is satisfied, then
\[ \|\bs{\phi}(t)-\bs{x}(t)\|_{\infty} = \varepsilon\left\|A^{-1}\bs{e}\right\|_{\infty} < \infty. \]
Conversely, we will show that $\|\bs{\phi}(t)-\bs{x}(t)\|_{\infty} < \infty$ implies $\bs{x}_0=\varepsilon A^{-1}\bs{e}$. To prove this, using reductio ad absurdum, we assume that $\|\bs{\phi}(t)-\bs{x}(t)\|_{\infty} < \infty$ on $\R$ and $\bs{y}_0\ne \bs{0}$, where
\[ \bs{y}_0=\begin{bmatrix} y_1 \\ y_2 \end{bmatrix} =\varepsilon A^{-1}\bs{e}-\bs{x}_0, \quad \bs{e} = \begin{bmatrix} u_1 \\ u_2 \end{bmatrix}. \]

(I) Suppose $A$ has real and distinct nonzero eigenvalues $\lambda_1$ and $\lambda_2$ with $\lambda_1 > \lambda_2$. By Lemma \ref{distinct}, $A$ has the form \eqref{Aformdistinct}, and $e^{tA}$ has the form \eqref{distinctev},
where $bc=(\lambda_1-a)(a-\lambda_2)$. 

(i) Suppose $\lambda_1 >0$, $\lambda_1 \ne a \ne \lambda_2$, with $b,c>0$ or $b,c<0$. Then
\begin{align*}
 &\lim_{t\rightarrow\infty}e^{-\lambda_1 t}\|\bs{\phi}(t)-\bs{x}(t)\|_{\infty} = \lim_{t\rightarrow\infty}e^{-\lambda_1 t}\left\|e^{tA}\bs{y}_0-\varepsilon A^{-1}\bs{e}\right\|_{\infty}\\
 &= \lim_{t\rightarrow\infty}\Bigg\|\frac{1}{\lambda_1-\lambda_2} \begin{bmatrix} a-\lambda_2+(\lambda_1-a)e^{(\lambda_2-\lambda_1)t} & b\left(1-e^{(\lambda_2-\lambda_1)t}\right) \\
          c\left(1-e^{(\lambda_2-\lambda_1)t}\right) & \lambda_1-a+(a-\lambda_2)e^{(\lambda_2-\lambda_1)t}\end{bmatrix}\bs{y}_0\\
 &\quad -\varepsilon e^{-\lambda_1 t}A^{-1}\bs{e}\Bigg\|_{\infty} \\
 &= \frac{1}{\lambda_1-\lambda_2} \Bigg\|\begin{bmatrix} a-\lambda_2 & b \\
          c & \lambda_1-a \end{bmatrix}\bs{y}_0 \Bigg\|_{\infty} > 0.
\end{align*}
This implies that $\lim_{t\rightarrow\infty}\|\bs{\phi}(t)-\bs{x}(t)\|_{\infty} = \infty$. This is a contradiction. 

(ii) Suppose $a=\lambda_1> 0$ and $b=0$ or $c=0$. First, suppose $a=\lambda_1>0$ and $b\ne 0=c$. Then
\begin{align*}
 &\lim_{t\rightarrow\infty}e^{-\lambda_1 t}\|\bs{\phi}(t)-\bs{x}(t)\|_{\infty}\\
 &= \lim_{t\rightarrow\infty}\Bigg\|\frac{1}{\lambda_1-\lambda_2} \begin{bmatrix} a-\lambda_2 & b\left(1-e^{(\lambda_2-\lambda_1)t}\right) \\
          0 & (a-\lambda_2)e^{(\lambda_2-\lambda_1)t}\end{bmatrix}\bs{y}_0 -\varepsilon e^{-\lambda_1 t}A^{-1}\bs{e}\Bigg\|_{\infty} \\
 &= \frac{1}{\lambda_1-\lambda_2} \Bigg\|\begin{bmatrix} a-\lambda_2 & b \\
          0 & 0 \end{bmatrix}\bs{y}_0 \Bigg\|_{\infty} > 0.
\end{align*}
This implies that $\lim_{t\rightarrow\infty}\|\bs{\phi}(t)-\bs{x}(t)\|_{\infty} = \infty$. 

Next, suppose $a=\lambda_1>0$ and $b=0\ne c$. If $y_1\ne0$, then
\begin{align*}
 &\lim_{t\rightarrow\infty}e^{-\lambda_1 t}\|\bs{\phi}(t)-\bs{x}(t)\|_{\infty}\\
 &= \lim_{t\rightarrow\infty}\Bigg\|\frac{1}{\lambda_1-\lambda_2} \begin{bmatrix} a-\lambda_2 & 0 \\
          c\left(1-e^{(\lambda_2-\lambda_1)t}\right) & (a-\lambda_2)e^{(\lambda_2-\lambda_1)t}\end{bmatrix}\bs{y}_0 -\varepsilon e^{-\lambda_1 t}A^{-1}\bs{e}\Bigg\|_{\infty} \\
 &= \frac{1}{\lambda_1-\lambda_2} \Bigg\|\begin{bmatrix} a-\lambda_2 & 0 \\
          c & 0 \end{bmatrix}\begin{bmatrix} y_1 \\ y_2 \end{bmatrix} \Bigg\|_{\infty} = \frac{1}{\lambda_1-\lambda_2}\max\{|(a-\lambda_2)y_1|, |cy_1| \} > 0,
\end{align*}
and thus, $\lim_{t\rightarrow\infty}\|\bs{\phi}(t)-\bs{x}(t)\|_{\infty} = \infty$. If $y_1=0$, then $y_2\ne0$ and
\begin{align*}
 &\|\bs{\phi}(t)-\bs{x}(t)\|_{\infty}\\
 &= \Bigg\|\frac{1}{\lambda_1-\lambda_2} \begin{bmatrix} (a-\lambda_2)e^{\lambda_1 t} & 0 \\
          c\left(e^{\lambda_1 t}-e^{\lambda_2 t}\right) & (a-\lambda_2)e^{\lambda_2 t}\end{bmatrix}\begin{bmatrix} 0 \\ y_2 \end{bmatrix} - \frac{\varepsilon}{\lambda_1\lambda_2}\begin{bmatrix} \lambda_2u_1 \\ -cu_1+\lambda_1u_2 \end{bmatrix}\Bigg\|_{\infty}\\
 &= \Bigg\|\begin{bmatrix} - \frac{\varepsilon u_1}{\lambda_1} \\ e^{\lambda_2 t} y_2- \frac{\varepsilon(-cu_1+\lambda_1u_2)}{\lambda_1\lambda_2} \end{bmatrix}\Bigg\|_{\infty} 
= \max\left\{\frac{\varepsilon |u_1|}{\lambda_1}, \left|e^{\lambda_2 t} y_2- \frac{\varepsilon(-cu_1+\lambda_1u_2)}{\lambda_1\lambda_2}\right| \right\}.
\end{align*}
This implies that $\lim_{t\rightarrow\infty}\|\bs{\phi}(t)-\bs{x}(t)\|_{\infty} = \infty$ if $\lambda_2>0$, and $\lim_{t\rightarrow-\infty}\|\bs{\phi}(t)-\bs{x}(t)\|_{\infty} = \infty$ if $\lambda_2<0$. 

Next, suppose $a=\lambda_1>0$ and $b=c=0$. If $y_1\ne0$, then
\[
 \lim_{t\rightarrow\infty}e^{-\lambda_1 t}\|\bs{\phi}(t)-\bs{x}(t)\|_{\infty} = \frac{1}{\lambda_1-\lambda_2} \Bigg\|\begin{bmatrix} a-\lambda_2 & 0 \\
          0 & 0 \end{bmatrix}\begin{bmatrix} y_1 \\ y_2 \end{bmatrix} \Bigg\|_{\infty} = |y_1| > 0.
\]
This implies $\lim_{t\rightarrow\infty}\|\bs{\phi}(t)-\bs{x}(t)\|_{\infty} = \infty$. If $y_1=0$, then $y_2\ne0$ and
\begin{align*}
 \|\bs{\phi}(t)-\bs{x}(t)\|_{\infty} &= \Bigg\|\begin{bmatrix} e^{\lambda_1 t} & 0 \\
          0 & e^{\lambda_2 t} \end{bmatrix}\begin{bmatrix} 0 \\ y_2 \end{bmatrix} - \frac{\varepsilon}{\lambda_1\lambda_2}\begin{bmatrix} \lambda_2u_1 \\ \lambda_1u_2 \end{bmatrix}\Bigg\|_{\infty}\\
 &= \Bigg\|\begin{bmatrix} - \frac{\varepsilon u_1}{\lambda_1} \\ e^{\lambda_2 t} y_2- \frac{\varepsilon u_2}{\lambda_2} \end{bmatrix}\Bigg\|_{\infty}
  = \max\left\{\frac{\varepsilon |u_1|}{\lambda_1}, \left|e^{\lambda_2 t} y_2- \frac{\varepsilon u_2}{\lambda_2}\right| \right\}.
\end{align*}
This implies that $\lim_{t\rightarrow\infty}\|\bs{\phi}(t)-\bs{x}(t)\|_{\infty} = \infty$ if $\lambda_2>0$, and $\lim_{t\rightarrow-\infty}\|\bs{\phi}(t)-\bs{x}(t)\|_{\infty} = \infty$ if $\lambda_2<0$. Therefore, there is a contradiction in all cases. Hence, in the case $a=\lambda_1> 0$ and $b=0$ or $c=0$, we can conclude that $\bs{y}_0 = \bs{0}$.

(iii) Suppose $\lambda_1> 0$, $a=\lambda_2$ and $b=0$ or $c=0$. First, suppose $\lambda_1> 0$, $a=\lambda_2$ and $b=0\ne c$. Then
\[
\lim_{t\rightarrow\infty}e^{-\lambda_1 t}\|\bs{\phi}(t)-\bs{x}(t)\|_{\infty} = \frac{1}{\lambda_1-\lambda_2} \Bigg\|\begin{bmatrix} 0 & 0 \\
          c & \lambda_1-a \end{bmatrix}\bs{y}_0 \Bigg\|_{\infty} > 0,
\]
and thus, $\lim_{t\rightarrow\infty}\|\bs{\phi}(t)-\bs{x}(t)\|_{\infty} = \infty$.

Next, suppose $\lambda_1> 0$, $a=\lambda_2$ and $b\ne 0=c$. If $y_2\ne0$, then
\[ \lim_{t\rightarrow\infty}e^{-\lambda_1 t}\|\bs{\phi}(t)-\bs{x}(t)\|_{\infty} = \frac{1}{\lambda_1-\lambda_2} \Bigg\|\begin{bmatrix} 0 & b \\
          0 & \lambda_1-a \end{bmatrix}\bs{y}_0 \Bigg\|_{\infty} > 0.\]
This implies $\lim_{t\rightarrow\infty}\|\bs{\phi}(t)-\bs{x}(t)\|_{\infty} = \infty$. If $y_2=0$, then $y_1\ne0$ and
\[
\|\bs{\phi}(t)-\bs{x}(t)\|_{\infty} = \max\left\{\left|e^{\lambda_2 t} y_1- \frac{\varepsilon(\lambda_1u_1-bu_2)}{\lambda_1\lambda_2}\right|, \frac{\varepsilon |u_2|}{\lambda_1} \right\}.
\]
This implies that $\lim_{t\rightarrow\infty}\|\bs{\phi}(t)-\bs{x}(t)\|_{\infty} = \infty$ if $\lambda_2>0$, and $\lim_{t\rightarrow-\infty}\|\bs{\phi}(t)-\bs{x}(t)\|_{\infty} = \infty$ if $\lambda_2<0$. 

Next, suppose $\lambda_1> 0$, $a=\lambda_2>0$ and $b=c=0$. If $y_2\ne0$, then
\[
 \lim_{t\rightarrow\infty}e^{-\lambda_1 t}\|\bs{\phi}(t)-\bs{x}(t)\|_{\infty} = \frac{1}{\lambda_1-\lambda_2} \Bigg\|\begin{bmatrix} 0 & 0 \\
          0 & \lambda_1-a \end{bmatrix}\begin{bmatrix} y_1 \\ y_2 \end{bmatrix} \Bigg\|_{\infty} = |y_2| > 0.
\]
This implies that $\lim_{t\rightarrow\infty}\|\bs{\phi}(t)-\bs{x}(t)\|_{\infty} = \infty$. If $y_2=0$, then $y_1\ne0$ and
\[
 \|\bs{\phi}(t)-\bs{x}(t)\|_{\infty} = \max\left\{\left|e^{\lambda_2 t} y_1- \frac{\varepsilon u_1}{\lambda_2}\right|, \frac{\varepsilon |u_2|}{\lambda_1} \right\}.
\]
This implies that $\lim_{t\rightarrow\infty}\|\bs{\phi}(t)-\bs{x}(t)\|_{\infty} = \infty$ if $\lambda_2>0$, and $\lim_{t\rightarrow-\infty}\|\bs{\phi}(t)-\bs{x}(t)\|_{\infty} = \infty$ if $\lambda_2<0$. Hence, in the case $\lambda_1> 0$, $a=\lambda_2$ and $b=0$ or $c=0$, we can conclude that $\bs{y}_0 = \bs{0}$.

Using the same method as above, we can see that $\bs{y}_0 = \bs{0}$ also holds for the cases $\lambda_2 <0$, $\lambda_1 \ne a \ne \lambda_2$, with $b,c>0$ or $b,c<0$; $a=\lambda_2< 0$ and $b=0$ or $c=0$; $\lambda_2< 0$, $a=\lambda_1$ and $b=0$ or $c=0$. Hence, if $A$ has real and distinct nonzero eigenvalues $\lambda_1$ and $\lambda_2$ with $\lambda_1 > \lambda_2$, then $\bs{y}_0 = \bs{0}$. 

(II) Suppose $A$ has a repeated nonzero eigenvalue $\lambda$. By Lemma \ref{distinct}, $A$ has the form \eqref{Aformrepeat},
and $e^{tA}$ has the form \eqref{repeatev}, where $\lambda\ne 0$ and $bc+\eta^2=0$, for some $\eta=\lambda-a$. 

(iv) Suppose $\lambda \ne a$ with $b,c>0$ or $b,c<0$. Then
\[
\lim_{\lambda t\rightarrow\infty}\frac{e^{-\lambda t}}{t}\|\bs{\phi}(t)-\bs{x}(t)\|_{\infty} = \Bigg\|\begin{bmatrix} -\eta & b \\
          c & \eta \end{bmatrix}\bs{y}_0 \Bigg\|_{\infty} > 0.
\]
This implies that $\lim_{\lambda t\rightarrow\infty}\|\bs{\phi}(t)-\bs{x}(t)\|_{\infty} = \infty$, and thus, $\bs{y}_0 = \bs{0}$.

(v) Suppose $\lambda = a$ with $b=0$ or $c=0$. Then
\[
\|\bs{\phi}(t)-\bs{x}(t)\|_{\infty} = \Bigg\|e^{\lambda t}\begin{bmatrix} y_1+bty_2 \\ cty_1+y_2 \end{bmatrix} - \frac{\varepsilon}{\lambda^2}\begin{bmatrix} \lambda u_1-bu_2 \\ -cu_1+\lambda u_2 \end{bmatrix}\Bigg\|_{\infty} > 0.
\]
From $\bs{y}_0\ne \bs{0}$, $\lim_{\lambda t\rightarrow\infty}\|\bs{\phi}(t)-\bs{x}(t)\|_{\infty} = \infty$, and thus, $\bs{y}_0 = \bs{0}$. Hence, if $A$ has a repeated nonzero eigenvalue $\lambda$, then $\bs{y}_0 = \bs{0}$. 

(III) Suppose $A$ has complex-valued eigenvalues $\lambda=\alpha\pm i\beta$ with $\alpha\ne0\ne\beta$. By Lemma \ref{distinct}, $A$ has the form \eqref{Aformcomplex},
and $e^{tA}$ has the form \eqref{complexev},
where $(\alpha-a)^2+bc+\beta^2=0$. Let
\[ t_n=\frac{2n\pi}{|\beta|}, \quad n\in \Z. \]
Then
\[
 e^{t_nA} = e^{\alpha t_n} I_2,
\]
and thus,
\[ \lim_{\alpha n\rightarrow\infty}e^{-\alpha t_n}\|\bs{\phi}(t_n)-\bs{x}(t_n)\|_{\infty} = \lim_{\alpha n\rightarrow\infty}\left\|\bs{y}_0-\varepsilon e^{-\alpha t_n}A^{-1}\bs{e}\right\|_{\infty} = \|\bs{y}_0\|_{\infty} \ne 0 \]
This implies that $\lim_{\alpha n\rightarrow\infty}\|\bs{\phi}(t_n)-\bs{x}(t_n)\|_{\infty} = \infty$. This is a contradiction. Hence, $\bs{y}_0 = \bs{0}$. 

To summarize the above facts, when the real parts of the two eigenvalues of $A$ are both nonzero the following holds: $\|\bs{\phi}(t)-\bs{x}(t)\|_{\infty}$ is bounded on $\R$ if only if
\[ \bs{x}_0=\varepsilon A^{-1}\bs{e} \]
holds. This means that
\[ \bs{x}(t)=\varepsilon e^{tA} A^{-1}\bs{e} \]
is the unique solution of \eqref{maineq} satisfying $\|\bs{\phi}(t)-\bs{x}(t)\|_{\infty}<\infty$ on $\R$. Consequently, 
\[ \|\bs{\phi}(t)-\bs{x}(t)\|_{\infty} = \varepsilon\left\|A^{-1}\bs{e}\right\|_{\infty} \]
holds on $\R$, and this says that the minimal Hyers-Ulam constant $K$ is greater than or equal to $\left\|A^{-1}\bs{e}\right\|_{\infty}$. This ends the proof.
\end{proof}


\section{Hyers-Ulam Stability Constants}

\begin{theorem}[Real, Nonzero, Same Sign $\implies$ Stable]\label{thm-cases}
If $A$ has distinct, positive eigenvalues $\lambda_1 > \lambda_2 > 0$, then \eqref{maineq} is Hyers-Ulam stable on $\R$, with HUS constant
\[ K= \begin{cases} \left\|A^{-1}\right\|_{\infty} : \lambda_1 \ge a \ge \lambda_2>0, \\
  \max\left\{ \frac{\lambda_1+\lambda_2+|b|-a}{\lambda_1\lambda_2}, \frac{a+|c|+2(\lambda_2-a)\left(\frac{\lambda_1-a}{\lambda_2-a}\right)^{\frac{-\lambda_2}{\lambda_1-\lambda_2}}}{\lambda_1\lambda_2}\right\} : \lambda_1 > \lambda_2 > a, \\
  \max\left\{\frac{\lambda_1+\lambda_2+|b|-a+2(a-\lambda_1)\left(\frac{a-\lambda_2}{a-\lambda_1}\right)^{\frac{-\lambda_2}{\lambda_1-\lambda_2}}}{\lambda_1 \lambda_2},\; \frac{a+|c|}{\lambda_1\lambda_2}\right\} : a > \lambda_1 > \lambda_2. \end{cases} \]
If $A$ has distinct, negative eigenvalues $0> \lambda_1 > \lambda_2$, then \eqref{maineq} is Hyers-Ulam stable on $\R$, with HUS constant
\[ K= \begin{cases} \left\|A^{-1}\right\|_{\infty} : 0>\lambda_1 \ge a \ge \lambda_2, \\
  \max\left\{ \frac{|\lambda_1|+|\lambda_2|+a+|b|+2(\lambda_1-a)\left(\frac{\lambda_1-a}{\lambda_2-a}\right)^{\frac{\lambda_2}{\lambda_1-\lambda_2}}}{\lambda_1\lambda_2}, \frac{|a|+|c|}{\lambda_1\lambda_2}\right\} : \lambda_1 > \lambda_2 > a, \\
  \max\left\{\frac{|\lambda_1|+|\lambda_2|+a+|b|}{\lambda_1\lambda_2},\; \frac{-a+|c|+2(a-\lambda_2)\left(\frac{a-\lambda_1}{a-\lambda_2}\right)^{\frac{-\lambda_2}{\lambda_1-\lambda_2}}}{\lambda_1\lambda_2}\right\}: a > \lambda_1 > \lambda_2. \end{cases} \]
If $A$ has a repeated eigenvalue $\lambda\ne 0$, then $A$ has the form
$$ A = \begin{bmatrix} a & b \\ c & d \end{bmatrix} = \begin{bmatrix} \lambda-\eta & b \\ c & \lambda+\eta \end{bmatrix} $$
for some $\eta=\lambda-a$, where $bc+\eta^2=0$, and \eqref{maineq} is Hyers-Ulam stable on $\R$, with HUS constant
\[ K= \begin{cases} \left\|A^{-1}\right\|_{\infty} : \eta=0 \; (bc=0), \\
  \max\left\{\frac{\lambda+\eta+|b|}{\lambda^2},\; \frac{\lambda+|c|+\eta(-1+2e^{-\frac{\lambda}{\eta}})}{\lambda^2}\right\} : \eta>0, \\
  \max\left\{\frac{\lambda+|\eta|+|c|}{\lambda^2},\; \frac{\lambda+|b|+|\eta|(-1+2e^{-\frac{\lambda}{|\eta|}})}{\lambda^2}\right\} : \eta<0 \end{cases} \]
when $\lambda>0$, and
\[ K= \begin{cases} \left\|A^{-1}\right\|_{\infty} : \eta=0 \; (bc=0), \\
  \max\left\{\frac{|\lambda|+\eta+|c|}{\lambda^2},\; \frac{|\lambda|+|b|+\eta(-1+2e^{\frac{\lambda}{\eta}})}{\lambda^2}\right\} : \eta>0, \\
  \max\left\{\frac{|\lambda|+|\eta|+|b|}{\lambda^2},\; \frac{|\lambda|+|c|+|\eta|(-1+2e^{-\frac{\lambda}{\eta}})}{\lambda^2}\right\} : \eta<0 \end{cases} \]
when $\lambda<0$.	
\end{theorem}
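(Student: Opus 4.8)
The plan is to produce, for each $\bs{\phi}$ satisfying \eqref{phi-eq}, an explicit solution $\bs{x}$ of \eqref{maineq} whose distance to $\bs{\phi}$ is an absolutely convergent improper integral of the matrix exponential, and then to evaluate the resulting constant using the closed forms supplied by Lemma \ref{distinct}. First I would set $\bs{g}(t):=\bs{\phi}'(t)-A\bs{\phi}(t)$, so that $\|\bs{g}(t)\|_\infty\le\varepsilon$, and use variation of parameters to write $\bs{\phi}(t)=e^{tA}\bs{\phi}(0)+\int_0^t e^{(t-s)A}\bs{g}(s)\,ds$. In the positive-eigenvalue cases $e^{-sA}\to\bs{0}$ as $s\to\infty$, so $\bs{x}_0:=\bs{\phi}(0)+\int_0^\infty e^{-sA}\bs{g}(s)\,ds$ converges; taking $\bs{x}(t)=e^{tA}\bs{x}_0$ telescopes the difference to
\[ \bs{\phi}(t)-\bs{x}(t)=-\int_0^\infty e^{-uA}\bs{g}(t+u)\,du. \]
In the negative-eigenvalue cases the same computation run over $(-\infty,t]$, where $e^{uA}$ now decays, gives $\bs{\phi}(t)-\bs{x}(t)=\int_0^\infty e^{uA}\bs{g}(t-u)\,du$.

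Writing $m_{ij}(u)$ for the entries of $e^{-uA}$ (respectively $e^{uA}$) and using $|g_j|\le\varepsilon$ in each coordinate yields $\|\bs{\phi}(t)-\bs{x}(t)\|_\infty\le K\varepsilon$ with
\[ K=\max_{i\in\{1,2\}}\left(\int_0^\infty|m_{i1}(u)|\,du+\int_0^\infty|m_{i2}(u)|\,du\right). \]
The problem thus reduces to four scalar integrals per regime. The off-diagonal entries are multiples of $e^{-\lambda_1 u}-e^{-\lambda_2 u}$ (or of $ue^{-\lambda u}$ in the repeated case), which keep a fixed sign on $(0,\infty)$ and integrate to $|b|/(\lambda_1\lambda_2)$ and $|c|/(\lambda_1\lambda_2)$ (resp.\ $|b|/\lambda^2$, $|c|/\lambda^2$). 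Moreover, since $\int_0^\infty e^{-uA}\,du=A^{-1}$ whenever the eigenvalues have positive real part, any integral whose integrand keeps a fixed sign collapses to the corresponding entry of $A^{-1}$; this is exactly why the middle regime $\lambda_1\ge a\ge\lambda_2$, in which both diagonal entries stay nonnegative, produces $K=\|A^{-1}\|_\infty$.

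The crux, and the main obstacle, is the diagonal integrals in the remaining regimes. A diagonal entry such as $(\lambda_1-a)e^{-\lambda_1 u}+(a-\lambda_2)e^{-\lambda_2 u}$ is single-signed precisely when $a\in[\lambda_2,\lambda_1]$; when $a<\lambda_2$ the second-row entry changes sign, and when $a>\lambda_1$ the first-row entry changes sign. In each such case I would locate the unique positive zero $u^\ast$ of the offending entry — for instance $u^\ast=\frac{1}{\lambda_1-\lambda_2}\ln\frac{\lambda_1-a}{\lambda_2-a}$ for the second-row entry when $\lambda_1>\lambda_2>a$ — split $\int_0^\infty|\cdot|$ as $\int_0^{u^\ast}-\int_{u^\ast}^\infty$, and integrate; the boundary value $e^{-\lambda_2 u^\ast}=\bigl(\frac{\lambda_1-a}{\lambda_2-a}\bigr)^{-\lambda_2/(\lambda_1-\lambda_2)}$ is the source of the correction factors in the second and third branches. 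The repeated-eigenvalue computation is identical in spirit, with the entry $(1-\eta u)e^{-\lambda u}$ vanishing at $u^\ast=1/\eta$ and contributing the term $\eta(-1+2e^{-\lambda/\eta})$. The negative-eigenvalue and $\eta<0$ tables then follow from the positive ones under the substitution $A\mapsto-A$, $t\mapsto-t$, which interchanges the two rows and flips the sign conditions. The only points needing care are confirming that $u^\ast>0$ and identifying which row realizes the maximum, both of which are dictated by the position of $a$ relative to $\lambda_1$ and $\lambda_2$.
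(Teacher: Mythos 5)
Your proposal is correct and follows essentially the same route as the paper: variation of parameters, selecting the unique bounded solution via the convergent improper integral $\bs{x}_0=\bs{\phi}(0)+\int_0^\infty e^{-sA}\bs{q}(s)\,ds$ (or its counterpart over $(-\infty,0]$), bounding the difference by the row sums of $\int_0^\infty|m_{ij}|$, and splitting the sign-changing diagonal entry at its unique zero to produce the correction terms. The only cosmetic difference is that you obtain the negative-eigenvalue and $\eta<0$ tables by the time-reversal symmetry $A\mapsto-A$, $t\mapsto-t$ rather than redoing the integrals, which is a valid shortcut consistent with the stated constants.
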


\begin{proof}
If $A$ has distinct, positive eigenvalues $\lambda_1 > \lambda_2 > 0$, then $A$ has the form \eqref{Aformdistinct},
and $e^{tA}$ has the form \eqref{distinctev}, where $bc=(\lambda_1-a)(a-\lambda_2)$.
Given an arbitrary $\varepsilon>0$, suppose a vector function $\bs{\phi}$ satisfies 
\[ \left\|\bs{\phi}'(t)-A\bs{\phi}(t)\right\|_{\infty} \le \varepsilon, \quad t\in\R. \]
Then, there exists a vector function $\bs{q}$ satisfying $\|\bs{q}(s)\|_{\infty} \le \varepsilon$ for all $s\in\R$, such that
$\bs{\phi}'(t)-A\bs{\phi}(t)=\bs{q}(t)$; by the variation of parameters formula,
$$ \bs{\phi}(t)=e^{tA}\bs{\phi}_0 + \int_0^t e^{(t-s)A}\bs{q}(s)ds, $$
for $\bs{\phi}(0)=\bs{\phi}_0$. Note that
\[ \lim_{t\rightarrow\infty}e^{-tA}\bs{\phi}(t) = \bs{\phi}_0 + \int_0^{\infty} e^{-sA}\bs{q}(s)ds \]
is well defined (finite) in this case, as both eigenvalues are positive.
As the general solution of \eqref{maineq} is $\bs{x}(t)=e^{tA}\bs{x}_0$ for constant vector $\bs{x}_0$, pick 
$$ \bs{x}(t)=e^{tA}\left(\bs{\phi}_0 + \int_0^{\infty}e^{-sA}\bs{q}(s)ds\right), \qquad \bs{x}_0=\bs{\phi}_0 + \int_0^{\infty}e^{-sA}\bs{q}(s)ds. $$
Then, we have
\begin{align*}
 &\|\bs{\phi}(t)-\bs{x}(t)\|_{\infty} \\
 &= \left\|-\int_t^{\infty}e^{(t-s)A}\bs{q}(s)ds\right\|_{\infty} = \frac{1}{\lambda_1-\lambda_2}\times \\
 &\bigg\|\int_t^{\infty} \begin{bmatrix} \left\{(a-\lambda_2)e^{\lambda_1(t-s)}+(\lambda_1-a)e^{\lambda_2(t-s)}\right\}q_1(s)
+b\left\{e^{\lambda_1(t-s)}-e^{\lambda_2(t-s)}\right\}q_2(s) \\ c\left\{e^{\lambda_1(t-s)}-e^{\lambda_2(t-s)}\right\}q_1(s)
+ \left\{(\lambda_1-a)e^{\lambda_1(t-s)}+(a-\lambda_2)e^{\lambda_2(t-s)}\right\}q_2(s)
                          \end{bmatrix} ds\bigg\|_{\infty} \\
 &\le \varepsilon\max\begin{cases} \frac{1}{\lambda_1-\lambda_2}\int_t^{\infty} \left(\left|(a-\lambda_2)e^{\lambda_1(t-s)}+(\lambda_1-a)e^{\lambda_2(t-s)}\right|
+|b|\left|e^{\lambda_1(t-s)}-e^{\lambda_2(t-s)}\right|\right)ds, \\ 
                                   \frac{1}{\lambda_1-\lambda_2}\int_t^{\infty} \left(|c|\left|e^{\lambda_1(t-s)}-e^{\lambda_2(t-s)}\right|+\left|(\lambda_1-a)e^{\lambda_1(t-s)}+(a-\lambda_2)e^{\lambda_2(t-s)}\right|\right)ds. \end{cases}
\end{align*}
We now proceed with cases depending on the relative size of $a$ with respect to $\lambda_1$ and $\lambda_2$, and on the signs of $b$ and $c$, recalling that $bc=(\lambda_1-a)(a-\lambda_2)$.

(i) Suppose $\lambda_1 > a > \lambda_2>0$, with $b,c>0$ or $b,c<0$. First, assume $b,c>0$. Then,
\begin{align*}
&\frac{1}{\lambda_1-\lambda_2}\int_t^{\infty} \left(\left|(a-\lambda_2)e^{\lambda_1(t-s)}+(\lambda_1-a)e^{\lambda_2(t-s)}\right|
+|b|\left|e^{\lambda_1(t-s)}-e^{\lambda_2(t-s)}\right|\right)ds \\
&=\frac{1}{\lambda_1-\lambda_2}\int_t^{\infty} \left((a-\lambda_2)e^{\lambda_1(t-s)}+(\lambda_1-a)e^{\lambda_2(t-s)}
+b\left(e^{\lambda_2(t-s)}-e^{\lambda_1(t-s)}\right)\right)ds \\
&=\frac{\lambda_1+\lambda_2+b-a}{\lambda_1\lambda_2},
\end{align*}
and
\begin{align*}
&\frac{1}{\lambda_1-\lambda_2}\int_t^{\infty} \left(|c|\left|e^{\lambda_1(t-s)}-e^{\lambda_2(t-s)}\right|+\left|(\lambda_1-a)e^{\lambda_1(t-s)}+(a-\lambda_2)e^{\lambda_2(t-s)}\right|\right)ds \\
&=\frac{1}{\lambda_1-\lambda_2}\int_t^{\infty} \left(c\left(e^{\lambda_2(t-s)}-e^{\lambda_1(t-s)}\right)+(\lambda_1-a)e^{\lambda_1(t-s)}+(a-\lambda_2)e^{\lambda_2(t-s)}\right)ds \\
&=\frac{a+c}{\lambda_1\lambda_2}.
\end{align*}
Consequently, we have
\[ \|\bs{\phi}(t)-\bs{x}(t)\|_{\infty} \le \varepsilon\max
   \begin{cases} \frac{\lambda_1+\lambda_2+b-a}{\lambda_1\lambda_2}, \\ \frac{a+c}{\lambda_1\lambda_2} \end{cases} = \varepsilon\left\|A^{-1}\right\|_{\infty}, \]
and
\[ \|\bs{\phi}(t)-\bs{x}(t)\|_{\infty} \le \varepsilon\max
   \begin{cases} \frac{\lambda_1+\lambda_2-b-a}{\lambda_1\lambda_2}, \\ \frac{a-c}{\lambda_1\lambda_2} \end{cases} = \varepsilon\left\|A^{-1}\right\|_{\infty} \]
if $b,c<0$. Therefore, for the case $\lambda_1 > a > \lambda_2>0$ and $b,c>0$ or $b,c<0$, we have that \eqref{maineq} is Hyers-Ulam stable, with Hyers-Ulam stability constant
$$ K_{(i)} = \max\begin{cases} \frac{\lambda_1+\lambda_2+|b|-a}{\lambda_1\lambda_2}, \\ \frac{a+|c|}{\lambda_1\lambda_2} \end{cases} = \varepsilon\left\|A^{-1}\right\|_{\infty}. $$

(ii) Suppose $\lambda_1 > \lambda_2 > a$, with $b<0<c$ or $c<0<b$. First, assume $b<0<c$. Then,
\begin{align*}
&\frac{1}{\lambda_1-\lambda_2}\int_t^{\infty} \left(\left|(a-\lambda_2)e^{\lambda_1(t-s)}+(\lambda_1-a)e^{\lambda_2(t-s)}\right|
+|b|\left|e^{\lambda_1(t-s)}-e^{\lambda_2(t-s)}\right|\right)ds \\
&=\frac{1}{\lambda_1-\lambda_2}\int_t^{\infty} \left((a-\lambda_2)e^{\lambda_1(t-s)}+(\lambda_1-a)e^{\lambda_2(t-s)}
+|b|\left(e^{\lambda_2(t-s)}-e^{\lambda_1(t-s)}\right)\right)ds \\
&=\frac{\lambda_1+\lambda_2+|b|-a}{\lambda_1\lambda_2},
\end{align*}
and
\begin{align*}
&\frac{1}{\lambda_1-\lambda_2}\int_t^{\infty} \left(|c|\left|e^{\lambda_1(t-s)}-e^{\lambda_2(t-s)}\right|+\left|(\lambda_1-a)e^{\lambda_1(t-s)}+(a-\lambda_2)e^{\lambda_2(t-s)}\right|\right)ds \\
&=\frac{1}{\lambda_1-\lambda_2}\int_t^{t+\frac{\ln\left(\frac{\lambda_1-a}{\lambda_2-a}\right)}{\lambda_1-\lambda_2}} \left(c\left(e^{\lambda_2(t-s)}-e^{\lambda_1(t-s)}\right)+(\lambda_1-a)e^{\lambda_1(t-s)}+(a-\lambda_2)e^{\lambda_2(t-s)}\right)ds \\
&+\frac{1}{\lambda_1-\lambda_2}\int_{t+\frac{\ln\left(\frac{\lambda_1-a}{\lambda_2-a}\right)}{\lambda_1-\lambda_2}}^{\infty} \left(c\left(e^{\lambda_2(t-s)}-e^{\lambda_1(t-s)}\right)-(\lambda_1-a)e^{\lambda_1(t-s)}-(a-\lambda_2)e^{\lambda_2(t-s)}\right)ds \\
&=\frac{a+c+2(\lambda_2-a)\left(\frac{\lambda_1-a}{\lambda_2-a}\right)^{\frac{-\lambda_2}{\lambda_1-\lambda_2}}}{\lambda_1\lambda_2}.
\end{align*}
Consequently, we have
\[ \|\bs{\phi}(t)-\bs{x}(t)\|_{\infty} \le \varepsilon\max
   \begin{cases} \frac{\lambda_1+\lambda_2+|b|-a}{\lambda_1\lambda_2}, \\ \frac{a+c+2(\lambda_2-a)\left(\frac{\lambda_1-a}{\lambda_2-a}\right)^{\frac{-\lambda_2}{\lambda_1-\lambda_2}}}{\lambda_1\lambda_2}, \end{cases} \]
and
\[ \|\bs{\phi}(t)-\bs{x}(t)\|_{\infty} \le \varepsilon\max
   \begin{cases} \frac{\lambda_1+\lambda_2+b-a}{\lambda_1\lambda_2}, \\ \frac{a+|c|+2(\lambda_2-a)\left(\frac{\lambda_1-a}{\lambda_2-a}\right)^{\frac{-\lambda_2}{\lambda_1-\lambda_2}}}{\lambda_1\lambda_2} \end{cases} \]
if $c<0<b$. Therefore, for the case $\lambda_1 > \lambda_2 > a$ and $b<0<c$ or $c<0<b$, we have that \eqref{maineq} is Hyers-Ulam stable, with Hyers-Ulam stability constant
$$ K_{(ii)} = \max\left\{ \frac{\lambda_1+\lambda_2+|b|-a}{\lambda_1\lambda_2}, \frac{a+|c|+2(\lambda_2-a)\left(\frac{\lambda_1-a}{\lambda_2-a}\right)^{\frac{-\lambda_2}{\lambda_1-\lambda_2}}}{\lambda_1\lambda_2}\right\}. $$

(iii) Suppose $a > \lambda_1 > \lambda_2$, with $b<0<c$ or $c<0<b$. 
The calculations are similar to those in case (ii) and are omitted. Therefore, for the case $a>\lambda_1 > \lambda_2$ and $b<0<c$ or $c<0<b$, we have that \eqref{maineq} is Hyers-Ulam stable, with Hyers-Ulam stability constant
$$ K_{(iii)} = \max\left\{\frac{\lambda_1+\lambda_2+|b|-a+2(a-\lambda_1)\left(\frac{a-\lambda_2}{a-\lambda_1}\right)^{\frac{-\lambda_2}{\lambda_1-\lambda_2}}}{\lambda_1 \lambda_2},\; \frac{a+|c|}{\lambda_1\lambda_2}\right\}. $$

(iv) Suppose $a=\lambda_1$ and $b=0$ or $c=0$. 
The calculations are straightforward. Therefore, for the case $a=\lambda_1 > \lambda_2$ and $b=0$ or $c=0$, we have that \eqref{maineq} is Hyers-Ulam stable, with Hyers-Ulam stability constant
$$ K_{(iv)} = \max\left\{\frac{\lambda_1+|c|}{\lambda_1\lambda_2},\;\frac{\lambda_2+|b|}{\lambda_1\lambda_2} \right\} = \varepsilon\left\|A^{-1}\right\|_{\infty}. $$

(v) Suppose $a=\lambda_2$ and $b=0$ or $c=0$. 
We leave the details to the reader. Therefore, for the case $\lambda_1 > \lambda_2=a$ and $b=0$ or $c=0$, we have that \eqref{maineq} is Hyers-Ulam stable, with Hyers-Ulam stability constant
$$ K_{(v)} = \max\left\{\frac{\lambda_1+|b|}{\lambda_1\lambda_2},\;\frac{\lambda_2+|c|}{\lambda_1\lambda_2} \right\} = \varepsilon\left\|A^{-1}\right\|_{\infty}. $$
The proof for the case where $A$ has distinct, negative eigenvalues $0> \lambda_1 > \lambda_2$, is similar to the distinct, positive case, and is omitted, as is the case where there is a repeated, nonzero eigenvalue. To help the reader understand, we note that the exact solution should be chosen as
$$ \bs{x}(t)=e^{tA}\left(\bs{\phi}_0 - \int_{-\infty}^0 e^{-sA}\bs{q}(s)ds\right), \qquad \bs{x}_0=\bs{\phi}_0 - \int_{-\infty}^0 e^{-sA}\bs{q}(s)ds $$
if $A$ has distinct, negative eigenvalues $0> \lambda_1 > \lambda_2$, or $A$ has a repeated, negative eigenvalue $\lambda < 0$. 
This ends the proof.
\end{proof}

Using a different method from that above, we can get a result that includes the case where the two real eigenvalues are of different sign.

\begin{theorem}[Distinct Nonzero Real $\implies$ Stable]\label{thm-distinct-Nonzero}
If $A$ has distinct nonzero real eigenvalues $\lambda_1 > \lambda_2$, then $A$ has the form \eqref{Aformdistinct}, and \eqref{maineq} is Hyers-Ulam stable on $\R$, with HUS constant
$$ K = \frac{|\lambda_2|\max\left\{|a-\lambda_2|+|b|, |c|+|\lambda_1-a|\right\} + |\lambda_1|\max\left\{|\lambda_1-a|+|b|, |c|+|a-\lambda_2|\right\}}{|\lambda_1\lambda_2|(\lambda_1-\lambda_2)}. $$
\end{theorem}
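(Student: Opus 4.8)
The plan is to read off the spectral decomposition of $e^{tA}$ from Lemma \ref{distinct} and then handle each eigendirection separately, integrating the perturbation in whichever time direction is dictated by the sign of the corresponding eigenvalue. The point of this device is that it treats $\lambda_1>\lambda_2>0$, $\lambda_1>0>\lambda_2$, and $0>\lambda_1>\lambda_2$ by one and the same argument, which is exactly what lets this result reach the mixed-sign case not covered by Theorem \ref{thm-cases}. First I would rewrite \eqref{distinctev} as $e^{tA}=e^{\lambda_1 t}P_1+e^{\lambda_2 t}P_2$, where
\[ P_1=\frac{1}{\lambda_1-\lambda_2}\begin{bmatrix} a-\lambda_2 & b \\ c & \lambda_1-a\end{bmatrix}, \qquad P_2=\frac{1}{\lambda_1-\lambda_2}\begin{bmatrix}\lambda_1-a & -b \\ -c & a-\lambda_2\end{bmatrix}, \]
so that $P_1+P_2=I_2$ and $AP_j=\lambda_j P_j$, with
\[ \|P_1\|_\infty=\frac{\max\{|a-\lambda_2|+|b|,\;|c|+|\lambda_1-a|\}}{\lambda_1-\lambda_2}, \qquad \|P_2\|_\infty=\frac{\max\{|\lambda_1-a|+|b|,\;|c|+|a-\lambda_2|\}}{\lambda_1-\lambda_2}. \]

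Next, exactly as in the proof of Theorem \ref{thm-cases}, I would write $\bs{\phi}'(t)-A\bs{\phi}(t)=\bs{q}(t)$ with $\|\bs{q}(s)\|_\infty\le\varepsilon$, invoke variation of parameters to obtain $\bs{\phi}(t)=e^{tA}\bs{\phi}_0+\int_0^t e^{(t-s)A}\bs{q}(s)\,ds$, and split the integrand using $e^{(t-s)A}=e^{\lambda_1(t-s)}P_1+e^{\lambda_2(t-s)}P_2$. The crucial step is to choose the comparison solution $\bs{x}(t)=e^{tA}\bs{x}_0$ so that each mode is corrected in the direction in which $e^{-\lambda_j s}P_j\bs{q}(s)$ is integrable: for a mode with $\lambda_j>0$ the constant $\bs{x}_0$ absorbs $+\int_0^\infty e^{-\lambda_j s}P_j\bs{q}(s)\,ds$, while for a mode with $\lambda_j<0$ it absorbs $-\int_{-\infty}^0 e^{-\lambda_j s}P_j\bs{q}(s)\,ds$; both integrals converge absolutely since $\|P_j\bs{q}(s)\|_\infty\le\|P_j\|_\infty\varepsilon$ and $\lambda_j\ne0$. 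With this choice the $j$-th mode of $\bs{\phi}(t)-\bs{x}(t)$ collapses to the single tail $-\int_t^\infty e^{\lambda_j(t-s)}P_j\bs{q}(s)\,ds$ when $\lambda_j>0$, and to $\int_{-\infty}^t e^{\lambda_j(t-s)}P_j\bs{q}(s)\,ds$ when $\lambda_j<0$.

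Finally I would estimate each tail by bringing the norm inside the integral and using $\|P_j\bs{q}(s)\|_\infty\le\|P_j\|_\infty\varepsilon$ together with $\int_t^\infty e^{\lambda_j(t-s)}\,ds=\int_{-\infty}^t e^{\lambda_j(t-s)}\,ds=1/|\lambda_j|$ (in the respective cases $\lambda_j>0$ and $\lambda_j<0$), which bounds mode $j$ by $\|P_j\|_\infty\varepsilon/|\lambda_j|$ uniformly in $t$. A triangle inequality over the two modes then yields
\[ \|\bs{\phi}(t)-\bs{x}(t)\|_\infty\le\left(\frac{\|P_1\|_\infty}{|\lambda_1|}+\frac{\|P_2\|_\infty}{|\lambda_2|}\right)\varepsilon, \]
and substituting the computed norms of $P_1,P_2$ and clearing denominators over $|\lambda_1\lambda_2|(\lambda_1-\lambda_2)$ reproduces the stated $K$. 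The main obstacle is not any individual estimate but the loss incurred at this last triangle-inequality step: bounding the two modes separately rather than keeping the full matrix integrand together inside the maximum norm (as Theorem \ref{thm-cases} does) is what makes this argument uniform across all sign patterns, at the expected cost that the resulting $K$ is valid but in general not minimal.
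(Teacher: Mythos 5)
Your proposal is correct and follows essentially the same route as the paper: the paper also decomposes $e^{tA}=e^{\lambda_1 t}A_1+e^{\lambda_2 t}A_2$ with your $P_1,P_2$ (called $A_1,A_2$ there), corrects each mode by the forward or backward tail integral according to the sign of $\lambda_j$, and concludes with the same triangle-inequality bound $\varepsilon(\|A_1\|_\infty/|\lambda_1|+\|A_2\|_\infty/|\lambda_2|)$. The only cosmetic difference is that the paper treats $\lambda_1>0>\lambda_2$ explicitly and declares the same-sign cases similar, whereas you phrase the argument uniformly from the start.
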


\begin{proof}
If $\lambda_1 > \lambda_2$ are distinct nonzero real eigenvalues of $A$, then $A$ has form \eqref{Aformdistinct}; by \eqref{distinctev}, 
$e^{tA}$ has the form
\[ e^{tA} = e^{\lambda_1 t}A_1+e^{\lambda_2 t}A_2 ,\]
where $bc=(\lambda_1-a)(a-\lambda_2)$ and
\[ A_1 = \frac{1}{\lambda_1-\lambda_2} \begin{bmatrix} a-\lambda_2 & b \\
 c & \lambda_1-a \end{bmatrix}, \quad A_2 = \frac{1}{\lambda_1-\lambda_2} \begin{bmatrix} \lambda_1-a & -b \\
 -c & a-\lambda_2 \end{bmatrix}. \]
Without loss of generality, assume $\lambda_1 > 0 > \lambda_2$; the cases for $\lambda_1 > \lambda_2 > 0$ and $0 > \lambda_1 > \lambda_2$ are similar and thus omitted. 
Given an arbitrary $\varepsilon>0$, suppose a vector function $\bs{\phi}$ satisfies 
\[ \left\|\bs{\phi}'(t)-A\bs{\phi}(t)\right\|_{\infty} \le \varepsilon, \quad t\in\R. \]
Then, there exists a vector function $\bs{q}$ satisfying $\|\bs{q}(s)\|_{\infty} \le \varepsilon$ for all $s\in\R$, such that
$\bs{\phi}'(t)-A\bs{\phi}(t)=\bs{q}(t)$; by the variation of parameters formula,\begin{align*}
 \bs{\phi}(t)&=e^{tA}\bs{\phi}_0 + \int_0^t e^{(t-s)A}\bs{q}(s)ds \\
  &= A_1\left(e^{\lambda_1 t}\bs{\phi}_0 + \int_0^t e^{\lambda_1 (t-s)}\bs{q}(s)ds\right) + A_2\left(e^{\lambda_2 t}\bs{\phi}_0 + \int_0^t e^{\lambda_2 (t-s)}\bs{q}(s)ds\right),
\end{align*}
where $\bs{\phi}(0)=\bs{\phi}_0$. Note that
\[ \bs{x}_1 = \bs{\phi}_0 + \int_0^{\infty} e^{-\lambda_1 s}\bs{q}(s)ds \quad \text{and}\quad \bs{x}_2 = \bs{\phi}_0 - \int_{-\infty}^0 e^{-\lambda_2 s}\bs{q}(s)ds \]
are well defined (finite) in the case $\lambda_1 > 0 > \lambda_2$. Now, we consider the function
$$ \bs{x}(t) = e^{\lambda_1 t}A_1\bs{x}_1 + e^{\lambda_2 t}A_2\bs{x}_2. $$
Then, $\bs{x}$ is a solution of \eqref{maineq}. In fact, since
$$ A_2A_1= \begin{bmatrix} 0 & 0 \\ 0 & 0 \end{bmatrix} =A_1A_2 $$
holds, we have
\begin{align*}
 \bs{x}'(t)-A\bs{x}(t) &= (\lambda_1I_2-A)e^{\lambda_1 t}A_1\bs{x}_1 + (\lambda_2I_2-A)e^{\lambda_2 t}A_2\bs{x}_2\\
 &= \left(\lambda_1-\lambda_2\right)\left(e^{\lambda_1 t}A_2A_1\bs{x}_1 - e^{\lambda_2 t}A_1A_2\bs{x}_2\right) = \bs{0}.
\end{align*}
Hence,
\begin{align*}
 &\|\bs{\phi}(t)-\bs{x}(t)\|_{\infty} = \left\| -A_1\int_t^{\infty} e^{\lambda_1 (t-s)}\bs{q}(s)ds + A_2\int_{-\infty}^t e^{\lambda_2 (t-s)}\bs{q}(s)ds \right\|_{\infty}\\
 &\le \varepsilon \left(\|A_1\|_{\infty}\int_t^{\infty} e^{\lambda_1 (t-s)}ds +   \|A_2\|_{\infty}\int_{-\infty}^t e^{\lambda_2 (t-s)}ds \right) \\
 &= \frac{\varepsilon\left(-\lambda_2\|A_1\|_{\infty}+\lambda_1\|A_2\|_{\infty}\right)}{-\lambda_1\lambda_2} \\
 &= \frac{\varepsilon\left(|\lambda_2|\max\{|a-\lambda_2|+|b|, |c|+|\lambda_1-a| \}+\lambda_1\max\{|\lambda_1-a|+|b|, |c|+|a-\lambda_2|\right\}}{\lambda_1|\lambda_2|(\lambda_1-\lambda_2)}.
\end{align*}
The proof is complete.
\end{proof}

\begin{theorem}[Complex with Nonzero Real Part $\implies$ Stable]\label{thm-complex}
If $A$ has eigenvalues $\lambda=\alpha\pm i\beta$, then \eqref{maineq} is Hyers-Ulam stable on $\R$, with HUS constant 
\[ K=\frac{\sqrt{\beta^2 +\left(|a-\alpha|+\max\{|b|, |c|\}\right)^2}}{|\alpha|\beta}. \]
\end{theorem}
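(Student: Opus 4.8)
The plan is to mirror the variation-of-parameters argument of Theorem~\ref{thm-distinct-Nonzero}, but to exploit the rotational structure of the complex case through a single Cauchy--Schwarz step. First I would record from Lemma~\ref{distinct} that $A$ has the form \eqref{Aformcomplex} with $e^{tA}$ given by \eqref{complexev}, and I would rewrite the latter in the compact form
\begin{equation*}
 e^{tA} = e^{\alpha t}\left(\cos(\beta t)I_2 + \tfrac{\sin(\beta t)}{\beta}B\right), \qquad B := A-\alpha I_2,
\end{equation*}
observing that the constraint $(\alpha-a)^2+bc+\beta^2=0$ forces $B^2=-\beta^2 I_2$, which is exactly what makes this exponential a damped rotation. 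Throughout I would take $\beta>0$ without loss of generality, and recall that the hypothesis (nonzero real part) means $\alpha\ne 0$. As in Theorem~\ref{thm-distinct-Nonzero}, condition \eqref{phi-eq} yields a function $\bs q$ with $\|\bs q(s)\|_\infty\le\varepsilon$ and $\bs\phi'-A\bs\phi=\bs q$, so that $\bs\phi(t)=e^{tA}\bs\phi_0+\int_0^t e^{(t-s)A}\bs q(s)\,ds$.

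Next I would choose the comparison solution according to the sign of $\alpha$, exactly as flagged at the end of the proof of Theorem~\ref{thm-cases}. For $\alpha>0$ the tail $\int_0^\infty e^{-sA}\bs q(s)\,ds$ converges (the trigonometric factor is bounded and $e^{-\alpha s}$ decays), so I would set $\bs x(t)=e^{tA}\bigl(\bs\phi_0+\int_0^\infty e^{-sA}\bs q(s)\,ds\bigr)$, giving $\bs\phi(t)-\bs x(t)=-\int_t^\infty e^{(t-s)A}\bs q(s)\,ds$. Substituting $u=s-t$ and using $\cos(-\beta u)=\cos(\beta u)$, $\sin(-\beta u)=-\sin(\beta u)$ turns this into $-\int_0^\infty e^{-\alpha u}\bigl(\cos(\beta u)I_2-\tfrac{\sin(\beta u)}{\beta}B\bigr)\bs q(t+u)\,du$. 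For $\alpha<0$ I would use the mirror choice $\bs x(t)=e^{tA}\bigl(\bs\phi_0-\int_{-\infty}^0 e^{-sA}\bs q(s)\,ds\bigr)$ and the substitution $u=t-s$, arriving at an integral against $e^{\alpha u}=e^{-|\alpha|u}$. In both cases the problem reduces to estimating a component of $\int_0^\infty e^{-|\alpha|u}\bigl(\cos(\beta u)I_2\mp\tfrac{\sin(\beta u)}{\beta}B\bigr)\bs q(t\pm u)\,du$.

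The heart of the argument is the componentwise estimate. For the first component the integrand takes the shape $e^{-|\alpha|u}\bigl(P(u)\cos(\beta u)+Q(u)\sin(\beta u)\bigr)$ with $P=q_1$ and $Q=\mp\tfrac1\beta\bigl((a-\alpha)q_1+b q_2\bigr)$; for the second component $P=q_2$ and $Q=\mp\tfrac1\beta\bigl(cq_1-(a-\alpha)q_2\bigr)$. The key step is the pointwise Cauchy--Schwarz bound $|P\cos(\beta u)+Q\sin(\beta u)|\le\sqrt{P^2+Q^2}$, valid because $\cos^2+\sin^2=1$; this is precisely what converts the oscillatory integrand into a clean amplitude and produces the square root in the statement. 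Bounding $P^2\le\varepsilon^2$ and $\bigl((a-\alpha)q_1+bq_2\bigr)^2\le\varepsilon^2(|a-\alpha|+|b|)^2$ (respectively with $|c|$ for the second component) gives $\sqrt{P^2+Q^2}\le\tfrac{\varepsilon}{\beta}\sqrt{\beta^2+(|a-\alpha|+|b|)^2}$, after which $\int_0^\infty e^{-|\alpha|u}\,du=1/|\alpha|$ and the maximum over the two components yield the asserted $K=\bigl(|\alpha|\beta\bigr)^{-1}\sqrt{\beta^2+(|a-\alpha|+\max\{|b|,|c|\})^2}$.

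I expect the main obstacle to be not any single computation but the realization that one must apply Cauchy--Schwarz against the unit vector $(\cos\beta u,\sin\beta u)$ \emph{before} invoking $\|\bs q\|_\infty\le\varepsilon$. A naive term-by-term estimate would replace the amplitude $\sqrt{\beta^2+(\cdots)^2}$ by $\int_0^\infty e^{-|\alpha|u}(|\cos\beta u|+\cdots|\sin\beta u|)\,du$, which is both strictly larger and lacks a tidy closed form; keeping the $\cos$ and $\sin$ contributions coupled is what delivers the clean constant. The only remaining care is bookkeeping the sign of $\alpha$ so that both the $\alpha>0$ and $\alpha<0$ reductions collapse to the same factor $1/|\alpha|$, which the choice of comparison solution handles automatically.
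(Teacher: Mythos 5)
Your proposal is correct and follows essentially the same route as the paper's proof: the same choice of comparison solution $\bs{x}_0=\bs{\phi}_0+\int_0^\infty e^{-sA}\bs{q}(s)\,ds$ for $\alpha>0$ (and its mirror for $\alpha<0$), followed by the same componentwise amplitude bound $|X\cos\theta+Y\sin\theta|\le\sqrt{X^2+Y^2}$ applied before invoking $\|\bs q\|_\infty\le\varepsilon$, and the same integration of $e^{-|\alpha|u}$ to produce the factor $1/|\alpha|$. The key step you single out is exactly the one the paper uses.
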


\begin{proof}
Consider \eqref{maineq} such that $A$ has form \eqref{Aformcomplex},
with $(\alpha-a)^2+bc+\beta^2=0$, where $\alpha\ne 0$ and $\beta>0$. Note that $A$ has eigenvalues $\lambda=\alpha\pm i\beta$. 
First, assume $\alpha>0$. We proceed with $e^{tA}$ in \eqref{complexev}.
Given an arbitrary $\varepsilon>0$, suppose a vector function $\bs{\phi}$ satisfies 
\[ \left\|\bs{\phi}'(t)-A\bs{\phi}(t)\right\|_{\infty} \le \varepsilon, \quad t\in\R. \]
Then, there exists a vector function $\bs{q}$ satisfying $\|\bs{q}(s)\|_{\infty} \le \varepsilon$ for all $s\in\R$, such that
$\bs{\phi}'(t)-A\bs{\phi}(t)=\bs{q}(t)$; by the variation of parameters formula,
$$ \bs{\phi}(t)=e^{tA}\bs{\phi}_0 + \int_0^t e^{(t-s)A}\bs{q}(s)ds, $$
for $\bs{\phi}(0)=\bs{\phi}_0$.
As the general solution of \eqref{maineq} is $\bs{x}(t)=e^{tA}\bs{x}_0$ for constant vector $\bs{x}_0$, pick 
$$ \bs{x}(t)=e^{tA}\left(\bs{\phi}_0 + \int_0^{\infty}e^{-sA}\bs{q}(s)ds\right), \qquad \bs{x}_0=\bs{\phi}_0 + \int_0^{\infty}e^{-sA}\bs{q}(s)ds, $$
which is well defined (finite) in this case, as $\alpha>0$. Then, we have
\begin{align*}
 &\|\bs{\phi}(t)-\bs{x}(t)\|_{\infty} \\
 &=\left\|-\int_t^{\infty}e^{(t-s)A}\bs{q}(s)ds\right\|_{\infty} \\
 &=\left\|\int_t^{\infty} e^{\alpha(t-s)} \begin{bmatrix} \left\{\cos(\beta(t-s))+\frac{a-\alpha}{\beta}\sin(\beta(t-s))\right\}q_1(s)+\frac{b}{\beta}\sin(\beta(t-s))q_2(s) \\ \frac{c}{\beta}\sin(\beta(t-s))q_1(s)+\left\{\cos(\beta(t-s))+\frac{\alpha-a}{\beta}\sin(\beta(t-s))\right\}q_2(s) \end{bmatrix} ds\right\|_{\infty} \\
 &=\frac{1}{\beta}\int_t^{\infty} \left\|e^{\alpha(t-s)} \begin{bmatrix} \beta q_1(s)\cos(\beta(t-s))+\left\{(a-\alpha)q_1(s)+bq_2(s)\right\}\sin(\beta(t-s)) \\ \beta q_2(s)\cos(\beta(t-s))+\left\{(\alpha-a)q_2(s)+cq_1(s)\right\}\sin(\beta(t-s)) \end{bmatrix} \right\|_{\infty}ds \\
 &\le \frac{1}{\beta}\max\begin{cases} \int_t^{\infty} e^{\alpha(t-s)} \sqrt{\beta^2 q_1^2(s)+\left\{(a-\alpha)q_1(s)+bq_2(s)\right\}^2}ds, \\ \int_t^{\infty} e^{\alpha(t-s)} \sqrt{\beta^2 q_2^2(s)+\left\{(\alpha-a)q_2(s)+cq_1(s)\right\}^2}ds \end{cases} \\
 &\le \frac{\varepsilon}{\beta}\max\begin{cases} \int_t^{\infty} e^{\alpha(t-s)} \sqrt{\beta^2 +\left(|a-\alpha|+|b|\right)^2}ds, \\ \int_t^{\infty} e^{\alpha(t-s)} \sqrt{\beta^2 +\left(|a-\alpha|+|c|\right)^2}ds \end{cases} \\
 &= \frac{\varepsilon}{\alpha\beta}\sqrt{\beta^2 +\left(|a-\alpha|+\max\{|b|, |c|\}\right)^2}.
\end{align*}
Consequently, \eqref{maineq} is Hyers-Ulam stable, with HUS constant 
$$K=\frac{\sqrt{\beta^2 +\left(|a-\alpha|+\max\{|b|, |c|\}\right)^2}}{\alpha\beta},$$
when $\alpha>0$. Suppose $\alpha<0$. As the general solution of \eqref{maineq} is $\bs{x}(t)=e^{tA}\bs{x}_0$ for constant vector $\bs{x}_0$, pick 
$$ \bs{x}(t)=e^{tA}\left(\bs{\phi}_0 - \int_{-\infty}^0 e^{-sA}\bs{q}(s)ds\right), \qquad \bs{x}_0=\bs{\phi}_0 - \int_{-\infty}^0 e^{-sA}\bs{q}(s)ds, $$
which is well defined (finite) in this case, as $\alpha<0$. Then, we have
\begin{align*}
 &\|\bs{\phi}(t)-\bs{x}(t)\|_{\infty} \\
 &=\left\|\int_{-\infty}^t e^{(t-s)A}\bs{q}(s)ds\right\|_{\infty} \\
 &=\left\|\int_{-\infty}^t e^{\alpha(t-s)} \begin{bmatrix} \left\{\cos(\beta(t-s))+\frac{a-\alpha}{\beta}\sin(\beta(t-s))\right\}q_1(s)+\frac{b}{\beta}\sin(\beta(t-s))q_2(s) \\ \frac{c}{\beta}\sin(\beta(t-s))q_1(s)+\left\{\cos(\beta(t-s))+\frac{\alpha-a}{\beta}\sin(\beta(t-s))\right\}q_2(s) \end{bmatrix} ds\right\|_{\infty} \\
 &=\frac{1}{\beta}\int_{-\infty}^t \left\|e^{\alpha(t-s)} \begin{bmatrix} \beta q_1(s)\cos(\beta(t-s))+\left\{(a-\alpha)q_1(s)+bq_2(s)\right\}\sin(\beta(t-s)) \\ \beta q_2(s)\cos(\beta(t-s))+\left\{(\alpha-a)q_2(s)+cq_1(s)\right\}\sin(\beta(t-s)) \end{bmatrix} \right\|_{\infty}ds \\
 &\le \frac{1}{\beta}\max\begin{cases} \int_{-\infty}^t e^{\alpha(t-s)} \sqrt{\beta^2 q_1^2(s)+\left\{(a-\alpha)q_1(s)+bq_2(s)\right\}^2}ds, \\ \int_{-\infty}^t e^{\alpha(t-s)} \sqrt{\beta^2 q_2^2(s)+\left\{(\alpha-a)q_2(s)+cq_1(s)\right\}^2}ds \end{cases} \\
 &\le \frac{\varepsilon}{\beta}\max\begin{cases} \int_{-\infty}^t e^{\alpha(t-s)} \sqrt{\beta^2 +\left(|a-\alpha|+|b|\right)^2}ds, \\ \int_{-\infty}^t e^{\alpha(t-s)} \sqrt{\beta^2 +\left(|a-\alpha|+|c|\right)^2}ds \end{cases} \\
 &= \frac{\varepsilon}{-\alpha\beta}\sqrt{\beta^2 +\left(|a-\alpha|+\max\{|b|, |c|\}\right)^2}.
\end{align*}
Consequently, \eqref{maineq} is Hyers-Ulam stable, with HUS constant 
$$K=\frac{\sqrt{\beta^2 +\left(|a-\alpha|+\max\{|b|, |c|\}\right)^2}}{-\alpha\beta},$$
when $\alpha<0$. The overall result follows.
\end{proof}

\section{Instability and Main Result}

\begin{lemma}[Zero Eigenvalue $\implies$ Unstable]\label{unstable1}
If at least one of the eigenvalues of $A$ is zero, then \eqref{maineq} is not Hyers-Ulam stable on $\R$.
\end{lemma}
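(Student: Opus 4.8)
The plan is to show directly that Hyers--Ulam stability fails by constructing, for each admissible $A$, an explicit $\bs{\phi}$ that satisfies the defect bound \eqref{phi-eq} yet drifts an unbounded distance from \emph{every} solution of \eqref{maineq}; since this blocks any finite constant $K$, instability follows. First I would reduce to cases. Because $A$ is real and $2\times2$, a zero eigenvalue forces both eigenvalues to be real (complex eigenvalues occur only in conjugate pairs), and $\det A = 0$. Thus either (1) the eigenvalues are $0$ and some $\nu\ne0$, so $A$ is diagonalizable, or (2) $0$ is a repeated eigenvalue, whence $\operatorname{tr}A=\det A=0$ and $A$ is nilpotent with $A^2=\bs{0}$.

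For case (1), let $\bs{e}$ be a unit (in $\|\cdot\|_\infty$) right null eigenvector, $A\bs{e}=\bs{0}$, and set $\bs{\phi}(t)=\varepsilon t\,\bs{e}$. Then $\bs{\phi}'(t)-A\bs{\phi}(t)=\varepsilon\bs{e}$, so $\|\bs{\phi}'(t)-A\bs{\phi}(t)\|_\infty=\varepsilon$ and \eqref{phi-eq} holds. The key device is a \emph{left} null eigenvector $\bs{w}$ with $\bs{w}^{\top}A=\bs{0}$: then $\bs{w}^{\top}e^{tA}=\bs{w}^{\top}$, so for any solution $\bs{x}(t)=e^{tA}\bs{x}_0$ the scalar $\bs{w}^{\top}\bs{x}(t)=\bs{w}^{\top}\bs{x}_0$ is constant in $t$, while $\bs{w}^{\top}\bs{\phi}(t)=\varepsilon t\,\bs{w}^{\top}\bs{e}$. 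Since $0$ is a simple eigenvalue, the left and right eigenvectors are not orthogonal, $\bs{w}^{\top}\bs{e}\ne0$, so $\bs{w}^{\top}(\bs{\phi}(t)-\bs{x}(t))$ is unbounded; hence $\|\bs{\phi}(t)-\bs{x}(t)\|_\infty\to\infty$ for every $\bs{x}_0$. Equivalently, one may substitute $\lambda_2=0$ into the explicit $e^{tA}$ of Lemma \ref{distinct} and read off the same conclusion.

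For case (2), $e^{tA}=I_2+tA$ by Lemma \ref{distinct}, so every solution $\bs{x}(t)=\bs{x}_0+tA\bs{x}_0$ is affine in $t$. If $A=\bs{0}$, take $\bs{\phi}(t)=\varepsilon t\,\bs{e}_1$; the defect is $\varepsilon$, solutions are constant, and the difference is unbounded. If $A\ne\bs{0}$, the perturbation in the kernel direction used in case (1) would be absorbed by the linear growth of solutions, so I instead force \emph{quadratic} growth: choose a unit vector $\bs{u}$ with $A\bs{u}\ne\bs{0}$ and set $\bs{\phi}(t)=\varepsilon t\,\bs{u}+\tfrac{\varepsilon}{2}t^2A\bs{u}$. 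Using $A^2=\bs{0}$ one checks $\bs{\phi}'(t)-A\bs{\phi}(t)=\varepsilon\bs{u}$, so \eqref{phi-eq} holds with defect $\varepsilon$. Since $\bs{\phi}$ carries a genuine $t^2$ term $\tfrac{\varepsilon}{2}t^2A\bs{u}$ with $A\bs{u}\ne\bs{0}$ while every solution is affine, $\|\bs{\phi}(t)-\bs{x}(t)\|_\infty\to\infty$ as $t\to\pm\infty$ for every $\bs{x}_0$.

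In all cases, for each $\varepsilon>0$ there is a $\bs{\phi}$ obeying \eqref{phi-eq} with $\sup_{t\in\R}\|\bs{\phi}(t)-\bs{x}(t)\|_\infty=\infty$ for every solution $\bs{x}$, so no finite $K$ can satisfy the Hyers--Ulam estimate and \eqref{maineq} is not Hyers--Ulam stable. The main obstacle is the nilpotent (double-zero) case: there the natural kernel-direction perturbation is \emph{resonant}, being matched by the solutions' own linear growth (reflected in the fact that $\bs{w}^{\top}\bs{e}=0$ here, which is exactly why case (1)'s argument breaks), and one must pass to a second-order secular term to outrun every solution.
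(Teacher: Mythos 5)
Your proposal is correct and follows essentially the same route as the paper: your approximate solutions ($\varepsilon t$ times a unit null vector when $0$ is a simple eigenvalue, and $\varepsilon t\,\bs{u}+\tfrac{\varepsilon}{2}t^{2}A\bs{u}$ in the nilpotent case) coincide with the paper's explicit choices, namely $\tfrac{\varepsilon t}{m}e^{tA}\bs{v}=\tfrac{\varepsilon t}{m}\bs{v}$ for a null vector $\bs{v}$ of $A$ and the quadratic perturbation built from $A$ applied to $(1,1)^{\top}$ or $(1,-1)^{\top}$. The only real difference is organizational: you verify divergence from every solution via the left null vector pairing $\bs{w}^{\top}\bigl(\bs{\phi}(t)-\bs{x}(t)\bigr)=\varepsilon t\,\bs{w}^{\top}\bs{e}-\bs{w}^{\top}\bs{x}_{0}$ with $\bs{w}^{\top}\bs{e}\ne 0$, which cleanly replaces the paper's entry-by-entry subcases on which of $a,b,c$ vanish.
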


\begin{proof}
Consider \eqref{maineq} with
$$ A=\begin{bmatrix} a & b \\ c & \lambda-a \end{bmatrix}, $$
where $bc=a(\lambda-a)$ for some real number $\lambda$. Then, $\lambda_1=\lambda$ and $\lambda_2=0$ are eigenvalues of $A$, and
\[ e^{tA} = \begin{cases} \dfrac{1}{\lambda}\begin{bmatrix} ae^{\lambda t}+\lambda-a & b(e^{\lambda t}-1) \\ c(e^{\lambda t}-1) & (\lambda-a)e^{\lambda t}+a \end{bmatrix} & \text{if}\;\lambda\ne 0, \\ \begin{bmatrix} 1+at & bt \\ ct & 1-at \end{bmatrix} & \text{if}\;\lambda=0.
\end{cases} \]
First, suppose $\lambda\ne 0$. If $a=b=0$, then 
\[ A = \begin{bmatrix} 0 & 0 \\ c & \lambda \end{bmatrix}, \quad 
   e^{tA} = \begin{bmatrix} 1 & 0 \\ \frac{c}{\lambda}(e^{\lambda t}-1) & e^{\lambda t} \end{bmatrix}. \]
Let $m:=\max\left\{1,\left|-\frac{c}{\lambda}\right|\right\}$, $\varepsilon>0$ be arbitrary, and set $\bs{\phi}(t)=\frac{\varepsilon t}{m}e^{tA} \begin{bmatrix}
1 \\ -\frac{c}{\lambda}\end{bmatrix}$. Then,
$$ \|\bs{\phi}'(t)-A\bs{\phi}(t)\|_{\infty} = \left\|\frac{\varepsilon}{m}e^{tA} \begin{bmatrix} 1 \\ -\frac{c}{\lambda}\end{bmatrix} \right\|_{\infty}
= \left\|\frac{\varepsilon}{m}\begin{bmatrix} 1 \\ -\frac{c}{\lambda}\end{bmatrix} \right\|_{\infty} = \varepsilon $$
by the choice of $m$, but for any solution $\bs{x}(t)=e^{tA}\bs{x}_0$ of \eqref{maineq},
$$ \|\bs{\phi}(t)-\bs{x}(t)\|_{\infty} = \left\|e^{tA}\left(\frac{\varepsilon t}{m} \begin{bmatrix} 1 \\ -\frac{c}{\lambda}\end{bmatrix}-\bs{x}_0 \right)\right\|_{\infty} \rightarrow\infty, \quad t\rightarrow \pm\infty, $$
for any vector $\bs{x}_0$, so that \eqref{maineq} is not Hyers-Ulam stable. Otherwise, $a\ne 0$ or $b\ne 0$ or both. Without loss of generality, assume $a\ne 0$; the case for $b\ne 0$ is similar and thus omitted. Let $m:=\max\left\{1,\left|-\frac{b}{a}\right|\right\}$, $\varepsilon>0$ be arbitrary, and set $\bs{\phi}(t)=\frac{\varepsilon t}{m}e^{tA} \begin{bmatrix} -\frac{b}{a} \\ 1 \end{bmatrix}$. Then,
$$ \|\bs{\phi}'(t)-A\bs{\phi}(t)\|_{\infty} = \left\|\frac{\varepsilon}{m}e^{tA} \begin{bmatrix} -\frac{b}{a} \\ 1 \end{bmatrix} \right\|_{\infty}
= \left\|\frac{\varepsilon}{m}\begin{bmatrix} -\frac{b}{a} \\ 1 \end{bmatrix} \right\|_{\infty} = \varepsilon $$
by the choice of $m$, but for any solution $\bs{x}(t)=e^{tA}\bs{x}_0$ of \eqref{maineq},
$$ \|\bs{\phi}(t)-\bs{x}(t)\|_{\infty} = \left\|e^{tA}\left(\frac{\varepsilon t}{m} \begin{bmatrix} -\frac{b}{a} \\ 1 \end{bmatrix}-\bs{x}_0 \right)\right\|_{\infty} \rightarrow\infty, \quad t\rightarrow \pm\infty, $$
for any vector $\bs{x}_0$, using $bc=a(\lambda-a)$, so that \eqref{maineq} is not Hyers-Ulam stable. As a result, in either case, \eqref{maineq} is not Hyers-Ulam stable when $\lambda\ne 0$.

Second, assume $\lambda=0$. Then, $$ A=\begin{bmatrix} a & b \\ c & -a \end{bmatrix}, \quad a^2+bc=0, $$
and
\[ e^{tA} = \begin{bmatrix} 1+at & bt \\ ct & 1-at \end{bmatrix}.  \]
If $b\ne -a$ or $c\ne a$ or both, let $\varepsilon>0$ be arbitrary, and set 
$\bs{\phi}(t) = \dfrac{\varepsilon t}{2} \begin{bmatrix} (a+b)t+2 \\ (c-a)t+2 \end{bmatrix}$. Then, using $a^2+bc=0$, we have
$$ \|\bs{\phi}'(t)-A\bs{\phi}(t)\|_{\infty} = \left\|\varepsilon\begin{bmatrix} 1 \\ 1 \end{bmatrix} \right\|_{\infty} = \varepsilon, $$
but for any solution $\bs{x}(t)=e^{tA}\bs{x}_0$ of \eqref{maineq},
$$ \|\bs{\phi}(t)-\bs{x}(t)\|_{\infty} = \left\|\dfrac{\varepsilon t}{2} \begin{bmatrix} (a+b)t+2 \\ (c-a)t+2 \end{bmatrix} - \begin{bmatrix} 1+at  & bt \\ ct & 1-at\end{bmatrix}\bs{x}_0 \right\|_{\infty} \rightarrow\infty, \quad t\rightarrow \pm\infty, $$
for any vector $\bs{x}_0$ since either $b\ne -a$ or $c\ne a$, so that \eqref{maineq} is not Hyers-Ulam stable. 
If $b=-a$ and $c=a$, let $\varepsilon>0$ be arbitrary, and set 
$\bs{\phi}(t) = \varepsilon t \begin{bmatrix} at+1 \\ at-1 \end{bmatrix}$. Then, 
$$ \|\bs{\phi}'(t)-A\bs{\phi}(t)\|_{\infty} = \left\|\varepsilon\begin{bmatrix} 1 \\ -1 \end{bmatrix} \right\|_{\infty} = \varepsilon, $$
but for any solution $\bs{x}(t)=e^{tA}\bs{x}_0$ of \eqref{maineq},
$$ \|\bs{\phi}(t)-\bs{x}(t)\|_{\infty} = \left\|\varepsilon t \begin{bmatrix} at+1 \\ at-1 \end{bmatrix} - \begin{bmatrix} 1+at  & -at \\ at & 1-at\end{bmatrix}\bs{x}_0 \right\|_{\infty} \rightarrow\infty, \quad t\rightarrow \pm\infty, $$
for any vector $\bs{x}_0$ and any $a\in\R$, so that \eqref{maineq} is not Hyers-Ulam stable in this case either. Thus, \eqref{maineq} is not Hyers-Ulam stable if $\lambda=0$. Putting all the cases and subcases together, the result follows and the proof is complete. 
\end{proof}

\begin{lemma}[Purely Imaginary $\implies$ Unstable]\label{unstable2}
If the eigenvalues of $A$ are purely imaginary, then \eqref{maineq} is not Hyers-Ulam stable on $\R$.
\end{lemma}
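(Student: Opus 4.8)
The plan is to exhibit, for an arbitrary $\varepsilon>0$, a differentiable $\bs{\phi}$ that satisfies the perturbation inequality \eqref{phi-eq} but whose distance to \emph{every} solution of \eqref{maineq} is unbounded, thereby ruling out any Hyers-Ulam constant $K$. By the complex-eigenvalue case of Lemma \ref{distinct} with $\alpha=0$, purely imaginary eigenvalues $\lambda=\pm i\beta$ (with $\beta\ne 0$) force
$$ A=\begin{bmatrix} a & b \\ c & -a \end{bmatrix}, \qquad a^2+bc+\beta^2=0, \qquad
 e^{tA}=\begin{bmatrix} \cos(\beta t)+\tfrac{a}{\beta}\sin(\beta t) & \tfrac{b}{\beta}\sin(\beta t) \\ \tfrac{c}{\beta}\sin(\beta t) & \cos(\beta t)-\tfrac{a}{\beta}\sin(\beta t) \end{bmatrix}. $$
The decisive structural feature here, in contrast to the stable cases of Theorems \ref{thm-cases}--\ref{thm-complex}, is that every entry of $e^{tA}$ is a \emph{bounded} trigonometric function of $t$. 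Hence, for a fixed $\bs{v}\ne\bs{0}$, both $m:=\sup_{t\in\R}\|e^{tA}\bs{v}\|_{\infty}$ and $M:=\sup_{t\in\R}\|e^{-tA}\|_{\infty}$ are finite, and $m\ge\|\bs{v}\|_{\infty}>0$ (evaluate at $t=0$).

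The construction mirrors the resonant choice used in Lemma \ref{unstable1}: fix any $\bs{v}\ne\bs{0}$ (for concreteness one may take $\bs{v}=\begin{bmatrix}1\\0\end{bmatrix}$) and set
$$ \bs{\phi}(t)=\frac{\varepsilon t}{m}\,e^{tA}\bs{v}. $$
Differentiating and using $(e^{tA})'=Ae^{tA}$ gives $\bs{\phi}'(t)-A\bs{\phi}(t)=\tfrac{\varepsilon}{m}e^{tA}\bs{v}$, so that
$\|\bs{\phi}'(t)-A\bs{\phi}(t)\|_{\infty}=\tfrac{\varepsilon}{m}\|e^{tA}\bs{v}\|_{\infty}\le\varepsilon$ by the definition of $m$; thus $\bs{\phi}$ satisfies \eqref{phi-eq}.

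For the blow-up, let $\bs{x}(t)=e^{tA}\bs{x}_0$ be any solution of \eqref{maineq}. Then
$\bs{\phi}(t)-\bs{x}(t)=e^{tA}\big(\tfrac{\varepsilon t}{m}\bs{v}-\bs{x}_0\big)$, and applying
$\|\bs{w}\|_{\infty}=\|e^{-tA}e^{tA}\bs{w}\|_{\infty}\le M\|e^{tA}\bs{w}\|_{\infty}$ with $\bs{w}=\tfrac{\varepsilon t}{m}\bs{v}-\bs{x}_0$ yields
$$ \|\bs{\phi}(t)-\bs{x}(t)\|_{\infty}\ge\frac{1}{M}\Big\|\tfrac{\varepsilon t}{m}\bs{v}-\bs{x}_0\Big\|_{\infty}\longrightarrow\infty \qquad (t\to\pm\infty), $$
since the linear-in-$t$ term dominates any fixed $\bs{x}_0$ and $\bs{v}\ne\bs{0}$. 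Hence no solution stays a bounded distance from $\bs{\phi}$, so \eqref{maineq} is not Hyers-Ulam stable. I expect the only genuine subtlety to be this two-sided boundedness step: one must invoke the uniform boundedness of $e^{-tA}$ (equivalently, the bounded invertibility of $e^{tA}$) to convert the linear growth of $\tfrac{\varepsilon t}{m}\bs{v}-\bs{x}_0$ into a lower bound on $\|\bs{\phi}-\bs{x}\|_{\infty}$, rather than trying to track the oscillating quantity $\|e^{tA}(\,\cdot\,)\|_{\infty}$ directly; everything else is routine.
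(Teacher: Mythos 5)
Your proposal is correct and follows essentially the same route as the paper: the same resonant perturbation $\bs{\phi}(t)=\frac{\varepsilon t}{m}e^{tA}\bs{v}$ with $m$ chosen so that \eqref{phi-eq} holds, followed by divergence of $\|\bs{\phi}(t)-\bs{x}(t)\|_{\infty}=\|e^{tA}(\frac{\varepsilon t}{m}\bs{v}-\bs{x}_0)\|_{\infty}$ as $t\to\pm\infty$. The only difference is that you make explicit, via the uniform bound on $\|e^{-tA}\|_{\infty}$, the lower-bound step that the paper leaves implicit when asserting the blow-up, which is a welcome clarification rather than a deviation.
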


\begin{proof}
Consider \eqref{maineq} with
$$ A=\begin{bmatrix} a & b \\ c & -a \end{bmatrix}, $$
where $a^2+bc+\beta^2=0$ for some real number $\beta>0$.
Note that $A$ has eigenvalues $\pm \beta i$. Using Putzer's algorithm \cite[Theorem 2.35]{kp} or \eqref{complexev}, we have
\[ e^{tA} = \begin{bmatrix} \cos(\beta t)+\frac{a}{\beta}\sin(\beta t) & \frac{b}{\beta}\sin(\beta t) \\ \frac{c}{\beta}\sin(\beta t) & \cos(\beta t)-\frac{a}{\beta}\sin(\beta t) 
\end{bmatrix}. \]
Let $m:=\max\left\{\frac{1}{\beta}\sqrt{a^2+\beta^2},\, \frac{|c|}{\beta}\right\}$. Given an arbitrary $\varepsilon>0$, let $\bs{\phi}(t):=\frac{\varepsilon t}{m}e^{tA}\begin{bmatrix} 1 \\ 0 \end{bmatrix}$. Then, $\bs{\phi}$ satisfies
\[ \bs{\phi}'(t)-A\bs{\phi}(t)=\frac{\varepsilon}{m}e^{tA}\begin{bmatrix} 1 \\ 0 \end{bmatrix}, \]
so that
\[ \left\|\bs{\phi}'(t)-A\bs{\phi}(t)\right\|_{\infty}=\left\|\frac{\varepsilon}{m}e^{tA}\begin{bmatrix} 1 \\ 0 \end{bmatrix}\right\|_{\infty}=\varepsilon. \]
As the general solution of \eqref{maineq} is $\bs{x}(t)=e^{tA}\bs{x}_0$ for constant vector $\bs{x}_0$, we have
\[ \|\bs{\phi}(t)-\bs{x}(t)\|_{\infty} = \left\|e^{tA}\left(\frac{\varepsilon t}{m}\begin{bmatrix} 1 \\ 0 \end{bmatrix}-\bs{x}_0\right) \right\|_{\infty} \longrightarrow\infty \quad\text{as}\quad {t\rightarrow\pm\infty} \]
for any constant vector $\bs{x}_0$, making \eqref{maineq} Hyers-Ulam unstable in this case.
\end{proof}

\begin{theorem}[Main Result]\label{thm-main}
Equation \eqref{maineq} is Hyers-Ulam stable on $\R$ if and only if $A$ has eigenvalues with non-zero real part. The best (minimal) Hyers-Ulam constant $K$ satisfies $K\ge \left\|A^{-1}\right\|_{\infty}$, with equality in some cases. In particular, if $A$ has distinct eigenvalues $\lambda_1 > \lambda_2$ with $\lambda_1 \lambda_2 > 0$ and $\lambda_1 \ge a \ge \lambda_2$, or $A$ has a repeated eigenvalue $\lambda\ne 0$ with $\lambda=a$ ($bc=0$), then the best (minimal) Hyers-Ulam constant is $\left\|A^{-1}\right\|_{\infty}$.
\end{theorem}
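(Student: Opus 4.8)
The plan is to assemble the biconditional and the constant bound from the stability results of Section 3, the instability lemmas of Section 4, and the sharp lower bound of Lemma \ref{lower-bound}; no genuinely new computation is needed, only a synthesis together with one optimization step. I would organize the argument into the two directions of the equivalence, the lower-bound assertion, and the two equality cases. For the instability direction I would argue by contrapositive: suppose some eigenvalue of $A$ has zero real part. Since $A$ is a real $2\times 2$ matrix, its eigenvalues are either both real or a complex conjugate pair $\alpha\pm i\beta$ with $\beta\neq 0$. A real eigenvalue with zero real part is itself zero, which is covered by Lemma \ref{unstable1}; a conjugate pair with zero real part is purely imaginary ($\alpha=0$), which is exactly the hypothesis of Lemma \ref{unstable2}. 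In either case \eqref{maineq} is not Hyers-Ulam stable, so stability forces both eigenvalues to have nonzero real part.

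For the converse I would verify that ``both eigenvalues have nonzero real part'' splits into three exhaustive cases, each already settled: distinct nonzero real eigenvalues (Theorem \ref{thm-distinct-Nonzero}), a repeated nonzero real eigenvalue (Theorem \ref{thm-cases}), and a complex conjugate pair with $\alpha\neq 0$ (Theorem \ref{thm-complex}). Each of these delivers Hyers-Ulam stability together with an explicit finite constant, completing the equivalence.

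Next comes the lower bound. Nonzero real parts guarantee that neither eigenvalue vanishes---in the complex case $\alpha\pm i\beta$ has modulus $\sqrt{\alpha^2+\beta^2}>0$---so $A$ is invertible and the hypotheses of Lemma \ref{lower-bound} are met, giving $K\ge\|A^{-1}\bs{e}\|_{\infty}$ for \emph{every} unit vector $\bs{e}$. The key observation is that the induced maximum norm satisfies $\|A^{-1}\|_{\infty}=\max_{\|\bs{e}\|_{\infty}=1}\|A^{-1}\bs{e}\|_{\infty}$, and that this maximum is attained at a vertex $\bs{e}$ of the unit cube (a vector with entries $\pm 1$, hence a legitimate unit vector in the max norm). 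Selecting that optimal $\bs{e}$ in Lemma \ref{lower-bound} upgrades the pointwise estimate to $K\ge\|A^{-1}\|_{\infty}$.

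Finally, for the two equality cases I would match bounds. When $\lambda_1>\lambda_2$ are distinct with $\lambda_1\lambda_2>0$ and $\lambda_1\ge a\ge\lambda_2$, or when $A$ has a repeated eigenvalue $\lambda\neq 0$ with $a=\lambda$ (so $\eta=0$ and $bc=0$), the first branch of the piecewise formula in Theorem \ref{thm-cases} gives the upper bound $K=\|A^{-1}\|_{\infty}$; combined with the lower bound just obtained, the minimal constant is exactly $\|A^{-1}\|_{\infty}$. The main obstacle I anticipate is precisely the lower-bound step: one must exploit that Lemma \ref{lower-bound} holds for an arbitrary unit vector and then optimize over $\bs{e}$, invoking the characterization of the induced $\infty$-norm as a maximum over the unit cube that is achieved at a $\pm 1$ vertex. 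Everything else is bookkeeping---confirming case exhaustiveness and invertibility and reading off the appropriate branch of Theorem \ref{thm-cases}.
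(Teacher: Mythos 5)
Your proposal is correct and follows essentially the same route as the paper: the equivalence is assembled from Theorems \ref{thm-cases}, \ref{thm-distinct-Nonzero}, \ref{thm-complex} and Lemmas \ref{unstable1}, \ref{unstable2}, and the lower bound comes from Lemma \ref{lower-bound} by choosing a unit vector $\bs{e}$ at which $\|A^{-1}\bs{e}\|_{\infty}=\|A^{-1}\|_{\infty}$. The only cosmetic difference is that you invoke the general fact that the induced $\infty$-norm is attained at a $\pm 1$ vertex, whereas the paper exhibits the optimal sign vector explicitly in each equality case (e.g.\ $u_1=1$, $u_2=-1$, or $u_1=-\mathrm{sign}(c)$, $u_2=1$); both accomplish the same step.
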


\begin{proof}
The stability part of the theorem follows from Theorems \ref{thm-cases}, \ref{thm-distinct-Nonzero}, \ref{thm-complex}. 
The instability part of the theorem follows from Lemmas \ref{unstable1} and \ref{unstable2}. 
The minimal Hyers-Ulam constant $K$ satisfies $K\ge \left\|A^{-1}\right\|_{\infty}$ by Lemma \ref{lower-bound}, with equality holding in the cases shown in Theorem \ref{thm-cases}.

Next, we will show that if $A$ has distinct eigenvalues $\lambda_1 > \lambda_2$ with $\lambda_1 \lambda_2 > 0$ and $\lambda_1 \ge a \ge \lambda_2$, or $A$ has a repeated eigenvalue $\lambda\ne 0$ with $\lambda=a$ ($bc=0$), then the best (minimal) Hyers-Ulam constant is $\left\|A^{-1}\right\|_{\infty}$. Let
$$ \bs{e} = \begin{bmatrix} u_1 \\ u_2 \end{bmatrix} $$
be a unit vector. Note here that
$$ \left\|A^{-1}\bs{e}\right\|_{\infty} = \frac{1}{|\lambda_1\lambda_2|}\max\{|(\lambda_1+\lambda_2-a)u_1-bu_2|, |-cu_1+au_2| \} $$
and
$$ \left\|A^{-1}\right\|_{\infty} = \frac{1}{|\lambda_1\lambda_2|}\max\{|\lambda_1+\lambda_2-a|+|b|, |c|+|a|\} $$
if $A$ has non-zero eigenvalues $\lambda_1 > \lambda_2$ with $\lambda_1 \lambda_2 > 0$ and $\lambda_1 \ge a \ge \lambda_2$; and
$$ \left\|A^{-1}\bs{e}\right\|_{\infty} = \frac{1}{\lambda^2}\max\{|\lambda u_1-bu_2|, |-cu_1+\lambda u_2| \} $$
and
$$ \left\|A^{-1}\right\|_{\infty} = \frac{|\lambda|+\max\{|b|, |c|\}}{\lambda^2} $$
if $A$ has a repeated eigenvalue $\lambda\ne 0$ with $\lambda=a$ ($bc=0$). 

Now we consider the first case. Suppose $\lambda_1 \ge a \ge \lambda_2 > 0$ and $b$, $c\ge0$. Put $u_1=1$ and $u_2=-1$. Then, we have
$$ \left\|A^{-1}\bs{e}\right\|_{\infty} = \frac{1}{|\lambda_1\lambda_2|}\max\{|\lambda_1+\lambda_2-a+b|, |-c-a| \} = \left\|A^{-1}\right\|_{\infty}. $$
The other options within this first case are similar and thus omitted.

Next we consider the second case. Suppose $\lambda > 0$ and $b=0$. Put $u_1=-\mathrm{sign}(c)$ and $u_2=1$. Then we have
$$ \left\|A^{-1}\bs{e}\right\|_{\infty} = \frac{\max\{|\lambda|, |c|+\lambda\}}{\lambda^2} = \left\|A^{-1}\right\|_{\infty}. $$
Likewise, the other options within this second case follow and are omitted.
Hence, in all of the above cases, $\left\|A^{-1}\right\|_{\infty}$ is the best Hyers-Ulam constant.
\end{proof}

\section{Application to Second-Order Linear Differential Equations}


\begin{remark}\label{rem1}
Consider the second-order linear constant coefficient homogeneous differential equation
\begin{equation}\label{2nd-order-eq}
   x''(t)-(\lambda_1+\lambda_2)x'(t)+\lambda_1\lambda_2 x(t)=0, \quad t\in\R, 
\end{equation}
where $\lambda_1,\lambda_2\in\R$, or they are complex conjugates of each other if not real. If we introduce the function $u=x'$ and the vector function $\bs{x}(t)=\begin{bmatrix} x \\ u \end{bmatrix}$, then this equation is equivalent to system \eqref{maineq} with $A=\begin{bmatrix} 0 & 1 \\ -\lambda_1\lambda_2 & \lambda_1+\lambda_2\end{bmatrix}$. We can easily show that if this system is Hyers-Ulam stable, then \eqref{2nd-order-eq} is also Hyers-Ulam stable. In fact, for any $\varepsilon>0$, we assume the condition
$$ q(t)=\phi''(t)-(\lambda_1+\lambda_2)\phi'(t)+\lambda_1\lambda_2 \phi(t), \quad |q(t)| \le \varepsilon, \quad t\in\R. $$
Using the function $\psi=\phi'$ and the vector function $\bs{\phi}(t)=\begin{bmatrix} \phi \\ \psi \end{bmatrix}$, we then have
$$ \left\|\bs{\phi}'(t)- \begin{bmatrix} 0 & 1 \\ -\lambda_1\lambda_2 & \lambda_1+\lambda_2\end{bmatrix}\bs{\phi}(t)\right\|_{\infty} = \left\|\begin{bmatrix} 0 \\ q(t) \end{bmatrix}\right\|_{\infty} = |q(t)| \le \varepsilon, $$
so that, if system \eqref{maineq} is Hyers-Ulam stable, then there exists a solution $\bs{x}=\begin{bmatrix} x \\ u \end{bmatrix}$ of system \eqref{maineq} such that
$$ \max\{|\phi(t)-x(t)|, |\psi(t)-u(t)|\} = \|\bs{\phi}(t)-\bs{x}(t)\|_{\infty} \le K \varepsilon, \quad t\in\R. $$
Hence, this implies $|\phi(t)-x(t)| \le K \varepsilon$ for $t\in\R$. Since $x$ is a solution of \eqref{2nd-order-eq}, equation \eqref{2nd-order-eq} is Hyers-Ulam stable on $\R$. As we can see from the proof of Lemmas $\ref{unstable1}$ and $\ref{unstable2}$, even if we focus only on the first component of the vector function $\bs{\phi}(t)-\bs{x}(t)$, we can see that the absolute value $|\phi(t)-x(t)|$ of it diverges as $t\rightarrow \pm\infty$. That is, the instability of system \eqref{maineq} and that of equation \eqref{2nd-order-eq} are equivalent.

Suppose $\operatorname{Re}(\lambda_j)=0$ for $j=1$ or $j=2$ or both. Then, \eqref{2nd-order-eq} is not Hyers-Ulam stable, by Theorem $\ref{thm-main}$.

Suppose $\lambda_1=\alpha+i\beta$ and $\lambda_2=\alpha-i\beta$ with $\alpha\ne 0$ and $\beta>0$. Then, 
$$ A=\begin{bmatrix} 0 & 1 \\ -(\alpha^2+\beta^2) & 2\alpha \end{bmatrix}, $$
and \eqref{2nd-order-eq} is Hyers-Ulam stable on $\R$, with HUS constant
\[ K=\frac{\sqrt{\beta^2 +\left(|\alpha|+\max\{1, \alpha^2+\beta^2\}\right)^2}}{|\alpha|\beta} \]
by Theorem $\ref{thm-complex}$.

Suppose $\lambda_1=\lambda_2=\lambda>0$. Then, 
$$ A=\begin{bmatrix} 0 & 1 \\ -\lambda^2 & 2\lambda \end{bmatrix}, $$
and \eqref{2nd-order-eq} is Hyers-Ulam stable on $\R$, with HUS constant 
\[ K=\begin{cases} \frac{2\lambda+1}{\lambda^2} &\text{if}\;\; 0 < \lambda < \frac{e-1}{e} + \frac{\sqrt{1-2e+2e^2}}{e}, \\
 \frac{\lambda+2e^{-1}}{\lambda} &\text{if}\;\; \lambda \ge \frac{e-1}{e} + \frac{\sqrt{1-2e+2e^2}}{e} \end{cases} \]
by Theorem $\ref{thm-cases}$.

Suppose $\lambda_1>0>\lambda_2$. Then, \eqref{2nd-order-eq} is Hyers-Ulam stable, with Hyers-Ulam stability constant
$$ K = \frac{|\lambda_2|(|\lambda_2|+1)\max\{1,|\lambda_1|\}+|\lambda_1|(|\lambda_1|+1)\max\{|\lambda_2|,1\}}{|\lambda_1\lambda_2|(\lambda_1-\lambda_2)} $$
by Theorem $\ref{thm-distinct-Nonzero}$.

Suppose $\lambda_1>\lambda_2>0$. Then, \eqref{2nd-order-eq} is Hyers-Ulam stable, with Hyers-Ulam stability constant
$$ K_{(ii)} = \max\left\{\frac{\lambda_1+\lambda_2+1}{\lambda_1\lambda_2}, \; \frac{\lambda_1\lambda_2+2\lambda_2\left(\frac{\lambda_1}{\lambda_2}\right)^{\frac{-\lambda_2}{\lambda_1-\lambda_2}}}{\lambda_1\lambda_2}\right\} $$
by Theorem $\ref{thm-cases}$.
\end{remark}


\begin{remark}
Again, consider the second-order linear constant coefficient homogeneous differential equation
\begin{equation}\label{2nd-order-eq-2}
   x''(t)-(\lambda_1+\lambda_2)x'(t)+\lambda_1\lambda_2 x(t)=0, \quad t\in\R, 
\end{equation}
where $\lambda_1,\lambda_2\in\R$. This time we introduce the function $u=x'-\lambda_1 x$ and the vector function $\bs{x}(t)=\begin{bmatrix} x \\ u \end{bmatrix}$, so that this equation is equivalent to system \eqref{maineq} with $A=\begin{bmatrix} \lambda_1 & 1 \\ 0 & \lambda_2\end{bmatrix}$. If $\lambda_j=0$ for $j=1$ or $j=2$ or both, then \eqref{2nd-order-eq-2} is not Hyers-Ulam stable by Theorem $\ref{thm-main}$ as before. If $\lambda_1=\lambda_2=\lambda>0$, then \eqref{2nd-order-eq-2} is Hyers-Ulam stable on $\R$, with HUS constant 
\[ K=\frac{1+\lambda}{\lambda^2}=\left\|A^{-1}\right\|_{\infty} \]
by Theorem $\ref{thm-cases}$. Moreover, since $\lambda = a$ is satisfied, this constant is the best constant for system \eqref{maineq} with $A=\begin{bmatrix} \lambda & 1 \\ 0 & \lambda\end{bmatrix}$ by Theorem $\ref{thm-main}$.

If $\lambda_1>\lambda_2>0$ or $0>\lambda_1>\lambda_2$, then \eqref{2nd-order-eq-2} is Hyers-Ulam stable, with Hyers-Ulam stability constant
$$ K = \left\|A^{-1}\right\|_{\infty} $$
by Theorem $\ref{thm-cases}$. Moreover, since $\lambda_1 = a$ is satisfied, this constant is the best constant for system \eqref{maineq} with $A=\begin{bmatrix} \lambda_1 & 1 \\ 0 & \lambda_2\end{bmatrix}$ by Theorem $\ref{thm-main}$.
 
If $\lambda_1$ and $\lambda_2$ are nonzero real numbers with $\lambda_1 > 0 > \lambda_2$,
then \eqref{2nd-order-eq-2} is Hyers-Ulam stable on $\R$, with HUS constant
$$ K = \frac{|\lambda_2|(\lambda_1-\lambda_2+1)+|\lambda_1|\max\left\{1, \lambda_1-\lambda_2\right\}}{|\lambda_1\lambda_2|(\lambda_1-\lambda_2)}, $$
using Theorem $\ref{thm-distinct-Nonzero}$.

Note that the different substitution used in this remark as compared with the previous remark leads to different HUS constants $K$ for the same equation. This is thought to be due to the effect of the second component $\psi(t)-u(t)$ of the vector function $\bs{\phi}(t)-\bs{x}(t)$ introduced in Remark $\ref{rem1}$.
\end{remark}

\begin{example}
Let $A=\begin{bmatrix} 2 & 1 \\ 1 & 2 \end{bmatrix}$. Note that $A^{-1}=\begin{bmatrix} \frac{2}{3} & -\frac{1}{3} \\ -\frac{1}{3} & \frac{2}{3} \end{bmatrix}$, $\lambda_1=3$, $a=2$, and $\lambda_2=1$, so $\lambda_1>a>\lambda_2$. By Theorem $\ref{thm-cases}$, \eqref{maineq} is Hyers-Ulam stable with HUS constant $K=\left\|A^{-1}\right\|_{\infty}=1$.

Let $A=\begin{bmatrix} 0 & 1 \\ -2 & 3 \end{bmatrix}$. Note that $A^{-1}=\begin{bmatrix} \frac{3}{2} & -\frac{1}{2} \\ 1 & 0 \end{bmatrix}$, $\lambda_1=2$, $a=0$, and $\lambda_2=1$, so $\lambda_1>\lambda_2>a$. By Theorem $\ref{thm-cases}$, \eqref{maineq} is Hyers-Ulam stable with HUS constant $K=\max\{2,\frac{3}{2}\}=2=\left\|A^{-1}\right\|_{\infty}$.

Let $A=\begin{bmatrix} 3 & 1 \\ -2 & 0 \end{bmatrix}$. Note that $A^{-1}=\begin{bmatrix} 0 & -\frac{1}{2} \\ 1 & \frac{3}{2} \end{bmatrix}$, $\lambda_1=2$, $a=3$, and $\lambda_2=1$, so $a>\lambda_1>\lambda_2$. By Theorem $\ref{thm-cases}$, \eqref{maineq} is Hyers-Ulam stable with HUS constant $K=\max\{1,\frac{5}{2}\}=\frac{5}{2}=\left\|A^{-1}\right\|_{\infty}$.

Let $A=\begin{bmatrix} 2 & -1 \\ 3 & -2 \end{bmatrix}=A^{-1}$. Note that $\lambda_1=1$ and $\lambda_2=-1$, so $\lambda_1>0>\lambda_2$. By Theorem $\ref{thm-distinct-Nonzero}$, \eqref{maineq} is Hyers-Ulam stable with HUS constant $K=\frac{\max\{4,4\}+\max\{2,6\}}{2}=5=\left\|A^{-1}\right\|_{\infty}$.

For each of the instances in this example, the HUS constant $K$ is the best possible by Theorem $\ref{thm-main}$. Additionally, these best HUS constants significantly improve the results given in \cite[Theorem 3]{bmppr} and \cite[Corollary 2]{jung2}, which have variable expressions rather than an HUS constant.
\end{example}

\section{Significance of the Results}
In conclusion, new necessary and sufficient conditions for a homogeneous linear differential system with 2 $\times$ 2 constant coefficient matrix to be Hyers-Ulam stable are proven. Moreover, for the first time, the best (minimal) Hyers-Ulam constant for such systems is found in some cases, along with a lower bound for the best constant in all stability cases. Obtaining the best Hyers-Ulam constant for second-order constant coefficient differential equations illustrates the applicability of the strong results. One future direction would be to find necessary and sufficient conditions for HUS of general linear systems, and the best HUS constant(s).

\section*{Funding}
The second author was supported by JSPS KAKENHI Grant Number JP20K03668.

\section*{Competing Interests}
The authors declare that they have no conflict of interest or competing interests.

\section*{Author Contributions}
Both authors contributed equally to the results in this paper.


\end{document}